\newtheorem{thm}{Theorem}[section]
\newtheorem{lemma}[thm]{Lemma}
\newtheorem{prop}[thm]{Proposition}
\title{On inverse problems for uncoupled space-time fractional operators
	involving time-dependent coefficients}
\author{Li Li}
\affil{Institute for Pure and Applied Mathematics, University of California,\\
	Los Angeles, CA 90095, USA}
\date{}
\begin{document}
	\maketitle
	
	\noindent \textbf{ABSTRACT.}\, We study the uncoupled space-time fractional operators involving time-dependent coefficients and formulate the corresponding inverse problems. Our goal is to determine the variable coefficients from the exterior partial measurements of the Dirichlet-to-Neumann map. We exploit the integration by parts formula for Riemann-Liouville and Caputo derivatives to derive the Runge approximation property for our space-time fractional operator based on the unique continuation property of the fractional Laplacian. This enables us to extend early unique determination results for space-fractional but time-local operators to the space-time fractional case.

	\section{Introduction}
	The basic uncoupled space-time fractional diffusion equation
	\begin{equation}\label{basicSTEq}
		\partial^\alpha_t u+ (-\Delta)^s u= 0,\qquad 0< \alpha< 1,\,\,
		0< s< 1
	\end{equation}
	models anomalous diffusion. Here the Caputo derivative $\partial^\alpha_t$
	(time-fractional derivative) describes particle trapping phenomena; The fractional Laplacian $(-\Delta)^s$ (space-fractional derivative)
	describes long particle jumps. See \cite{meerschaert2002governing} for more background information on (\ref{basicSTEq}). See \cite{chen2012space} for a probabilistic interpretation for (\ref{basicSTEq}).
	
	In this paper, we consider uncoupled space-time fractional operators involving time-dependent coefficients which generalize $\partial^\alpha_t+ (-\Delta)^s$
	and formulate the corresponding inverse problems.
	
	\subsection{Main results}
	We will first study the initial exterior problem
	\begin{equation}\label{Cafracspacetime}
		\left\{
		\begin{aligned}
			\partial^\alpha_t u+ \mathcal{R}^s_{A(t)} u + q(t)u&= 0
			\quad \,\,\,\Omega\times (0, T)\\
			u&= g\quad \,\,\,\Omega_e\times (0, T)\\
			u&= 0\quad \,\,\,\Omega\times \{0\}
		\end{aligned}
		\right.
	\end{equation}
	where $\Omega$ is a bounded domain and $\Omega_e:= \mathbb{R}^n\setminus \Omega$. 
	Here the time-dependent fractional operator 
	$\mathcal{R}^s_{A(t)}$ is formally defined by
	\begin{equation}\label{RsAt}
		\mathcal{R}^s_{A(t)} u(x):= \lim_{\epsilon\to 0^+}
		\int_{\mathbb{R}^n\setminus B_\epsilon(x)}(u(x)- R_{A(t)}(x, y)u(y))K(x,y)\,dy
	\end{equation}
	where
	\begin{equation}\label{Kxy}
		K(x, y):= c_{n, s}/|x-y|^{n+2s},
	\end{equation}
	$A(\cdot, t)$ is a time-dependent real vector-valued magnetic potential and
	\begin{equation}\label{cost}
		R_{A(t)}(x, y):= \cos((x-y)\cdot A(\frac{x+y}{2}, t)).
	\end{equation}
	The operator $\mathcal{R}^s_{A(t)}$ coincides with the fractional Laplacian $(-\Delta)^s$ when $A\equiv 0$.
	
	Under appropriate regularity and support assumptions on magnetic potential $A$, electric potential $q$ and the exterior data $g$, 
	(\ref{Cafracspacetime}) is well-posed, so we can define the associated  Dirichlet-to-Neumann map by
	\begin{equation}\label{tDN}
		\Lambda_{A, q}g:= \mathcal{R}^s_A u_g|_{\Omega_e\times (0, T)}.
	\end{equation}
	
	Our goal here is to determine both $A$ and $q$ from the exterior partial measurements of $\Lambda_{A, q}$.
	The following theorem is the first main result in this paper, which can be viewed as a space-time fractional analogue of Theorem 1.1 in \cite{li2021fractional}.
	
	\begin{thm}
		Suppose $\Omega\subset B_r(0)$ for some constant $r> 0$, $\mathrm{supp}\,A_j(t)\subset\Omega$ for $t\in [0, T]$, $A_j\in C([0, T]; L^\infty(\mathbb{R}^n))$,
		$q_j\in C([0, T]; L^\infty(\Omega))$, 
		$W_j$ are open sets s.t. $W_j\cap B_{3r}(0)= \emptyset$
		($j= 1, 2$). Let
		$$W^{(1, 2)}= \{\frac{x+ y}{2}: x\in W_1, y\in W_2\}.$$
		Also assume $W^{(1, 2)}\setminus \Omega\neq \emptyset$. If 
		\begin{equation}\label{IDN}
			\Lambda_{A_1, q_1}g|_{W_2\times (0, T)}
			= \Lambda_{A_2, q_2}g|_{W_2\times (0, T)}
		\end{equation}
		for any $g\in C^\infty_c(W_1\times (0, T))$, then $A_1(t)= \pm A_2(t)$ and $q_1= q_2$ in $\Omega\times (0, T)$.
	\end{thm}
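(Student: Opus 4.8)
The plan is to follow the by-now-standard variational/Runge-approximation scheme for nonlocal inverse problems, adapted to the fractional time derivative. First I would record the bilinear form attached to the spatial operator at each fixed time,
\[
B_{A(t)}(u,v)=\frac{c_{n,s}}{2}\iint_{\mathbb{R}^{2n}}\frac{\bigl(u(x)-R_{A(t)}(x,y)u(y)\bigr)\bigl(v(x)-R_{A(t)}(x,y)v(y)\bigr)}{|x-y|^{n+2s}}\,dx\,dy,
\]
which is well defined because $R_{A(t)}(x,y)$ is symmetric in $x,y$. Combining this with the integration-by-parts formula for Caputo/Riemann--Liouville derivatives (which converts the forward Caputo problem into a backward-in-time Riemann--Liouville adjoint problem), I would test the equation for $u_1$ against the adjoint solution $v_2$ and vice versa. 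Letting $u_1$ solve (\ref{Cafracspacetime}) with $(A_1,q_1)$ and data $g_1\in C^\infty_c(W_1\times(0,T))$, and $v_2$ solve the adjoint problem with $(A_2,q_2)$ and data $g_2\in C^\infty_c(W_2\times(0,T))$, this yields an Alessandrini-type identity
\[
\langle(\Lambda_{A_1,q_1}-\Lambda_{A_2,q_2})g_1,\,g_2\rangle=\int_0^T\Bigl[(B_{A_1(t)}-B_{A_2(t)})(u_1,v_2)+\int_\Omega (q_1-q_2)\,u_1 v_2\,dx\Bigr]dt.
\]

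Next I would use the hypothesis (\ref{IDN}): since the measured data agree on $W_2\times(0,T)$ for all exterior data supported in $W_1\times(0,T)$, the left-hand side vanishes for every admissible pair $(g_1,g_2)$, so the time integral of the bracketed quantity is zero. I would then invoke the Runge approximation property established earlier from the unique continuation property of $(-\Delta)^s$: solutions with exterior data in $W_1$ (respectively $W_2$) are dense, in the appropriate topology, among admissible functions on $\Omega$. This allows the interior values of $u_1$ and $v_2$ to be replaced by essentially arbitrary test functions. Because the coefficients depend on $t$ and the data are prescribed on the whole interval, I would combine the spatial Runge density with density in the time variable so as to localize the vanishing identity near each time slice $t$.

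The substantive step is the recovery of $A$. Expanding $B_{A_1(t)}-B_{A_2(t)}$, the diagonal terms $u_1(x)v_2(x)$ cancel and one is left with integrals weighted by $R_{A_1}-R_{A_2}$ and $R_{A_1}^2-R_{A_2}^2$; both weights are supported where the midpoint $\tfrac{x+y}{2}\in\Omega$, since $R_{A_j}\equiv 1$ off $\mathrm{supp}\,A_j\subset\Omega$. Concentrating the exterior data near points $x_0\in W_1$ and $y_0\in W_2$ makes the midpoint $\tfrac{x_0+y_0}{2}$ range over $W^{(1,2)}$, and the standing assumptions $W_j\cap B_{3r}(0)=\emptyset$ together with $W^{(1,2)}\setminus\Omega\neq\emptyset$ give the geometric room to isolate the cosine weight and to vary the difference direction $x-y$ freely while controlling the nonlocal interior contributions. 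Matching $\cos\bigl((x-y)\cdot A_1(\tfrac{x+y}{2},t)\bigr)=\cos\bigl((x-y)\cdot A_2(\tfrac{x+y}{2},t)\bigr)$ over enough directions then forces $A_1(t)=\pm A_2(t)$ in $\Omega$, the $\pm$ ambiguity being exactly the gauge invariance $R_A=R_{-A}$ inherited from the evenness of cosine. I expect this extraction of the vector potential from the nonlinear cosine weight, uniformly in $t$ and while taming the nonlocal remainder terms, to be the main obstacle.

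Finally, once $A_1=\pm A_2$ is known, the operators $\mathcal{R}^s_{A_1}$ and $\mathcal{R}^s_{A_2}$ coincide, so the bilinear-form difference drops out of the identity and only
\[
\int_0^T\!\!\int_\Omega (q_1-q_2)\,u_1 v_2\,dx\,dt=0
\]
remains. A second application of the Runge approximation property, yielding density of the products $u_1 v_2$ in the relevant space over $\Omega\times(0,T)$, then forces $q_1=q_2$ in $\Omega\times(0,T)$, completing the argument.
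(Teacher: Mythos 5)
Your overall skeleton (adjoint problem, fractional integration by parts, Alessandrini-type identity, Runge approximation from the unique continuation property of $(-\Delta)^s$) matches the paper's, but the step where you actually determine $A$ does not work as described, and that is the substantive part of the proof. You propose to recover $A$ by concentrating the exterior data near $x_0\in W_1$ and $y_0\in W_2$ so that the midpoint $\tfrac{x_0+y_0}{2}$ ranges over $W^{(1,2)}$. Since $\mathrm{supp}\,A_j(t)\subset\Omega$, the weight $R_{A_1}-R_{A_2}$ vanishes unless $\tfrac{x+y}{2}\in\Omega$, so exterior-to-exterior pairs can only probe $A$ on $W^{(1,2)}\cap\Omega$, which in general is a proper subset of $\Omega$ and may well be empty (take $W_1=W_2$ a small ball far from the origin). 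Moreover the hypothesis is $W^{(1,2)}\setminus\Omega\neq\emptyset$, i.e.\ part of $W^{(1,2)}$ lies \emph{outside} $\Omega$: that condition, together with $W_j\cap B_{3r}(0)=\emptyset$, is there to control the exterior--exterior cross term so that the integral identity reduces to an integral over $\Omega\times\Omega$; it does not supply midpoints covering $\Omega$. The paper instead determines $A$ from the \emph{interior} interaction: by the Runge approximation one makes $u_1$ and $u_2^*$ approximate $1_{[a,b]}(t)\phi_1(x)$ and $1_{[a,b]}(t)\phi_2(x)$ in $L^2(\Omega\times(0,T))$ with $\phi_1,\phi_2$ supported in \emph{disjoint} open subsets of $\Omega$; the disjointness kills the local term $\int(q_2-q_1)u_1u_2^*$ while the nonlocal term $\iint G(x,y,t)\phi_1(y)\phi_2(x)$ survives, forcing $G(x,y,t)=0$ for all $x\neq y$ in $\Omega$, hence $R_{A_1(t)}=R_{A_2(t)}$ on all of $\Omega\times\Omega$ (so the midpoint genuinely sweeps $\Omega$ and the direction $x-y$ is free), and then $A_1=\pm A_2$. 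Your write-up never invokes this disjoint-support mechanism to decouple $A$ from $q$, so as stated it cannot conclude $R_{A_1}=R_{A_2}$ on $\Omega\times\Omega$.

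A secondary issue: the bilinear form you attach to $\mathcal{R}^s_{A(t)}$ is not the one induced by the operator in (\ref{RsAt}). The correct form is $\langle\mathcal{R}^s_{A(t)}u,v\rangle=2\iint(u(x)-R_{A(t)}(x,y)u(y))v(x)K(x,y)\,dx\,dy$; your ``product of differences'' expression differs from it by a multiplication operator (after symmetrizing in $x$ and $y$, the integrands differ by a term proportional to $(R_{A(t)}^2-1)(u(x)v(x)+u(y)v(y))K(x,y)$), which is exactly why spurious $R_{A_1}^2-R_{A_2}^2$ terms appear in your expansion. With the paper's form the diagonal terms cancel exactly and the difference of the bilinear forms is precisely $\iint G(x,y,t)\,u_1(y,t)\,u_2^*(x,t)$ with $G=2(R_{A_2(t)}-R_{A_1(t)})K$, which is the quantity the disjoint-support argument needs. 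Your treatment of $q$ at the end is correct in spirit (approximate $u_1$ by an arbitrary $f$ and $u_2^*$ by the constant $1$), but it only becomes available after $A$ has been recovered by the argument above.
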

	We remark that the seemingly unnatural assumptions on $W_j$ in the statement are necessary (see the remark after Theorem 1.1 in \cite{li2021fractional}). However, if we replace $\mathcal{R}^s_{A(t)}$ by $(-\Delta)^s$ in (\ref{Cafracspacetime}), only interested in the determination of $q$, then the assumptions on $W_j$ in the statement can be simply replaced by $W_j\subset \Omega_e$ (see Subsection 3.2 for details).
	
	We will next study the semilinear problem
	\begin{equation}\label{Semifracspacetime}
		\left\{
		\begin{aligned}
			\partial^\alpha_t u+ (-\Delta)^s u + a(x, t, u)&= 0
			\quad \,\,\,\Omega\times (0, T)\\
			u&= g\quad \,\,\,\Omega_e\times (0, T)\\
			u&= 0\quad \,\,\,\Omega\times \{0\}
		\end{aligned}
		\right.
	\end{equation}
	where the nonlinearity satisfies
	\begin{equation}\label{fracpower}
		a(x, t, z)= \sum^m_{k=1}a_k(x, t)|z|^{b_k}z,
	\end{equation}
	$a_k\geq 0$ are smooth in $\bar{\Omega}\times [0, T]$, $b_1= 0$ and the powers $0< b_2<\cdots< b_m$ are not necessarily integers.
	We will see that the Dirichlet-to-Neumann map 
	\begin{equation}\label{semiDN}
		\Lambda_a: g\to (-\Delta)^su_g|_{\Omega_e\times (0, T)}
	\end{equation}
	is well-defined at least for $g\in C^\infty_c(\Omega_e\times (0, T))$.
	
	Our goal here is to determine the nonlinearity $a$ from the exterior partial measurements of $\Lambda_a$.
	The following theorem is the second main result in this paper, which can be viewed as a space-time fractional analogue of Theorem 1.1 in \cite{li2021inversediff}.
	
	\begin{thm}
		Let $W_1, W_2\subset \Omega_e$ be nonempty and open. If
		\begin{equation}\label{semiIDN}
			\Lambda_{a^{(1)}} g|_{W_2\times (0, T)}= \Lambda_{a^{(2)}} g|_{W_2\times (0, T)},
			\qquad g\in C^\infty_c(W_1\times (0, T)),
		\end{equation}
		then $a^{(1)}_k= a^{(2)}_k$ in $\Omega\times (0, T)$, $k= 1,2,\cdots,m$.
	\end{thm}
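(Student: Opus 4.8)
The plan is to combine the higher-order linearization scheme with the Runge approximation property for the space-time fractional operator established earlier in the paper. I would first record the small-data well-posedness of \eqref{Semifracspacetime}: for $g$ in a small neighborhood of $0$ in $C^\infty_c(W_1\times(0,T))$ there is a unique small solution $u_g$ depending continuously on $g$, so that for $g=\epsilon f$ with $f$ fixed the solution admits an asymptotic expansion in $\epsilon$. The decisive feature of the nonlinearity \eqref{fracpower} is that the monomial $a_k(x,t)\,|z|^{b_k}z$ contributes to $u_{\epsilon f}$ precisely at order $\epsilon^{b_k+1}$; since $b_1=0<b_2<\cdots<b_m$, the exponents $b_k+1$ are pairwise distinct, and this is exactly what allows the individual coefficients to be separated from a single family of exterior measurements.

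First I would linearize at $\epsilon=0$. Because $\partial_z a(x,t,0)=a_1(x,t)$ while each higher monomial vanishes to order $b_k>0$ at $z=0$, the first-order profile $v$ of $u_{\epsilon f}$ solves the \emph{linear} problem $\partial^\alpha_t v+(-\Delta)^s v+a_1v=0$ with exterior data $f$ and zero initial value. Hypothesis \eqref{semiIDN} then forces the two linear Dirichlet-to-Neumann maps with potentials $a_1^{(1)}$ and $a_1^{(2)}$ to agree on $W_2\times(0,T)$, and the linear determination result (the $A\equiv0$ case of Theorem 1.1, in the simplified form valid for $W_j\subset\Omega_e$ noted in the remark) gives $a_1^{(1)}=a_1^{(2)}$ in $\Omega\times(0,T)$.

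I would then argue by induction on $k$. Assume $a_j^{(1)}=a_j^{(2)}$ for all $j<k$. In the difference $u^{(1)}_{\epsilon f}-u^{(2)}_{\epsilon f}$ every contribution coming from the already-matched coefficients $a_j$ with $j<k$ cancels, while those from $a_j$ with $j>k$ occur at orders strictly higher than $\epsilon^{b_k+1}$; hence the leading term of the difference, the coefficient $\phi$ of $\epsilon^{b_k+1}$, solves a linear problem whose operator involves only the recovered coefficients and the profile $v$, with source carrying the unknown,
\begin{equation*}
\partial^\alpha_t\phi+(-\Delta)^s\phi+a_1\phi=-(a_k^{(1)}-a_k^{(2)})\,|v|^{b_k}v,
\end{equation*}
and with vanishing exterior data. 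Extracting the $\epsilon^{b_k+1}$ coefficient from \eqref{semiIDN} shows that $(-\Delta)^s\phi$ vanishes on $W_2\times(0,T)$. Pairing $\phi$ with a solution $w$ of the corresponding backward-in-time adjoint linear equation and using the integration-by-parts formulas for the Caputo derivative and for $(-\Delta)^s$ to discard all boundary contributions, I arrive at the orthogonality relation
\begin{equation*}
\int_0^T\!\!\int_\Omega (a_k^{(1)}-a_k^{(2)})\,|v|^{b_k}v\,w\,dx\,dt=0 .
\end{equation*}

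Finally I would invoke the Runge approximation property to let $v$ and $w$ range over families dense in, say, $C(\bar\Omega\times[0,T])$; choosing $v$ and $w$ independently and using the continuity of $u\mapsto|u|^{b_k}u$, the products $|v|^{b_k}v\,w$ span enough of the test space to force the continuous function $a_k^{(1)}-a_k^{(2)}$ to vanish on $\Omega\times(0,T)$, closing the induction. I expect the principal obstacle to be this inductive step, and specifically the fact that the powers $b_k$ need not be integers: then $z\mapsto|z|^{b_k}z$ is not smooth at the origin, so the genuine differentiation of the solution map underlying the classical higher-order linearization is unavailable. One must instead justify the fractional-order asymptotic expansion in $\epsilon$ directly, with quantitative remainder estimates in the natural energy spaces showing that the contributions of the different monomials truly decouple at the distinct orders $\epsilon^{b_k+1}$; controlling these remainders in the presence of the nonlocal-in-time Caputo derivative is the delicate technical core of the proof.
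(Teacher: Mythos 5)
Your overall architecture (linearization in the boundary data plus Runge approximation) is in the right family, but the route you take through higher-order linearization has a genuine gap that you yourself flag and do not close, and it misses the one observation that makes the paper's proof short. The missing idea is this: since $u^{(1)}_{\lambda,g}$ and $u^{(2)}_{\lambda,g}$ both equal $\lambda g$ in $\Omega_e\times(0,T)$ and, by \eqref{semiIDN}, have the same fractional Laplacian on $W_2\times(0,T)$, their difference $v$ satisfies $v=(-\Delta)^s v=0$ in $W_2$ for each $t$, so the unique continuation property (Proposition 2.3) gives $u^{(1)}_{\lambda,g}=u^{(2)}_{\lambda,g}=:u_{\lambda,g}$ in all of $\mathbb{R}^n\times(0,T)$ \emph{exactly}, for every $\lambda$ --- not merely to leading order. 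Subtracting the two equations then leaves the pointwise algebraic identity
\begin{equation*}
(a^{(1)}_1-a^{(2)}_1)\,u_{\lambda,g}=R^{(2)}_1(x,t,u_{\lambda,g})-R^{(1)}_1(x,t,u_{\lambda,g}),
\qquad R^{(i)}_j(x,t,z):=\sum_{k=j+1}^m a^{(i)}_k(x,t)|z|^{b_k}z,
\end{equation*}
with no remaining PDE to solve. From here one needs only the \emph{first-order} linearization $u_{\lambda,g}/\lambda\to u_g$ (Proposition 4.3), the a priori bound $\|u_{\lambda,g}\|_{L^\infty}\le\lambda\|g\|_{L^\infty}$ (Proposition 4.1), which makes the right-hand side $o(\lambda)$, and the Runge property (Proposition 3.6) to make $u_g$ close to the constant $1$ in $L^2$; dividing by $\lambda$ and letting $\lambda\to 0$ gives $a^{(1)}_1=a^{(2)}_1$, and the induction on $k$ repeats the argument after dividing by $\lambda^{b_k+1}$. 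No asymptotic expansion of the solution map beyond first order, no adjoint pairing, and no density argument for products $|v|^{b_k}v\,w$ is needed.

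The gap in your version is precisely the step you identify at the end: for non-integer $b_k$ the map $z\mapsto|z|^{b_k}z$ is not smooth at the origin, the solution map is not differentiable in $\epsilon$ to the required order, and the ``coefficient of $\epsilon^{b_k+1}$'' that you want to extract both from the difference of solutions and from \eqref{semiIDN} is not defined until you have constructed the fractional-order expansion together with remainder estimates ruling out interference from the interaction orders (such as $2b_2+1$, $b_2+b_3+1$, etc.). You correctly observe that the exponents $b_k+1$ are distinct and that interaction terms built from already-matched coefficients cancel in the difference, but as written this is a program rather than a proof: the remainder control in the presence of the nonlocal Caputo derivative is exactly the content you would have to supply, and your first-order step also tacitly assumes that the linearized DN map is the limit of $\lambda^{-1}\Lambda_{a^{(i)}}(\lambda g)$ on $W_2$, which itself requires an argument. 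All of this evaporates once unique continuation is applied to the full nonlinear solutions at the outset.
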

	
	\subsection{Connection with earlier literature}
	So far there have been many contributions in the study of inverse problems for time-fractional but space-local operators. See, for instance, \cite{kian2018global} for a work in this direction where the authors determined various time-independent smooth
	coefficients appearing in the equation from the knowledge of the associated  Dirichlet-to-Neumann map based on the inverse spectral theory.
	
	The study of (Calder\'on type) inverse problems for space-fractional operators dates back to \cite{ghosh2020calderon}. See \cite{ghosh2017calderon, ghosh2020uniqueness, ghosh2021calder, covi2022higher, li2021determining, li2022elastic} for further results in this direction.
	The proof of unique determination results in all these works heavily relies on exploiting the unique continuation property of the fractional Laplacian, which makes inverse problems for space-fractional operators often more manageable than their space-local counterparts. We adopt this framework here as well.
	Our approaches will be mainly based on the ones introduced in \cite{li2021fractional, li2021inversediff}. 
	
	To the best knowledge of the author, there are very few existing rigorous works on inverse problems for space-time fractional operators. Calder\'on type problems for the coupled space-time fractional operator $(\partial_t-\Delta)^s$
	has been studied in \cite{lai2020calderon, banerjee2022calder}. This fractional evolutionary operator actually behaves more like a fractional elliptic operator, which distinguishes it from our 
	uncoupled space-time fractional operators. Another work in this direction can be found in \cite{helin2020inverse} where the authors determined the Riemannian metric appearing in the uncoupled space-time fractional equation up to an isometry from the knowledge of the associated source-to-solution map in the setting of closed manifolds (also see \cite{feizmohammadi2021fractional, quan2022calder} for such geometric inverse problems for other fractional operators).
	
	We also mention that our first inverse problem involving a magnetic potential can be viewed as a space-time fractional analogue of the classical magnetic Calder\'on problem studied in \cite{nakamura1995global, ferreira2007determining, krupchyk2014uniqueness}. See \cite{covi2020inverse, lai2021inverse} for the study of Calder\'on type problems for a different kind of fractional magnetic operators.
	
	\subsection{Organization}
	The rest of this paper is organized in the following way. In Section 2, we will summarize the background knowledge. In Section 3, we will first show the well-posedness of (\ref{Cafracspacetime}); Then we will derive the integral identity for the Dirichlet-to-Neumann maps and the Runge approximation property
	of the space-time fractional operator to prove Theorem 1.1. In Section 4, we will first prove an a priori $L^\infty$ estimate and the well-posedness of (\ref{Semifracspacetime}); Then we will derive a linearization result to prove Theorem 1.2.
	\medskip
	
	\noindent \textbf{Acknowledgements.} The author would like to thank Professor Gunther Uhlmann for helpful discussions.
	
	\section{Preliminaries}
	Throughout this paper we use the following notations.
	
	\begin{itemize}
		\item Fix the space dimension $n\geq 2$.
		
		\item Fix the fractional powers $0< \alpha< 1$ and $0< s< 1$.
		
		\item $\Omega$ denotes a bounded domain with smooth boundary and
		$\Omega_e:= \mathbb{R}^n\setminus\bar{\Omega}$.
		
		\item $B_r(0)$ denotes the open ball centered at the origin with radius $r> 0$
		in $\mathbb{R}^n$.
		
		\item $c, C, C', C_1,\cdots$ denote positive constants.
		
		\item $X$ denotes a Banach space and $X^*$ denotes the continuous dual space of $X$.
		
		\item $\langle f, u\rangle$ denotes the standard $L^2$ distributional pairing
		between $f$ and $u$ if $u$ (resp., $f$) is a (spatial) $n$-variable function (resp., functional).
		
		\item For functions $f(t), g(t)$ ($t> 0$), we use the standard convolution notation
		$$(f* g)(t)= \int^t_0 f(\tau)g(t-\tau)\,d\tau.$$
	\end{itemize}
	
	\subsection{Function spaces}
	Throughout this paper we refer all function spaces to real-valued function spaces. 
	We use $H^r$ to denote the standard Sobolev space $W^{r,2}$.
	We use $C^\alpha$ to denote the standard H\"older space $C^{0, \alpha}$.
	
	Let $U$ be an open set in $\mathbb{R}^n$. Let $F$ be a closed set in $\mathbb{R}^n$. Then
	$$H^r(U):= \{u|_U: u\in H^r(\mathbb{R}^n)\},\qquad 
	H^r_F(\mathbb{R}^n):= 
	\{u\in H^r(\mathbb{R}^n): \mathrm{supp}\,u\subset F\},$$
	$$\tilde{H}^r(U):= 
	\mathrm{the\,\,closure\,\,of}\,\, C^\infty_c(U)\,\,\mathrm{in}\,\, H^r(\mathbb{R}^n).$$
	
	For $r\in\mathbb{R}$, we have the natural identifications
	$$H^{-r}(\mathbb{R}^n)= H^r(\mathbb{R}^n)^*,\quad \tilde{H}^r(\Omega)= H^r_{\bar{\Omega}}(\mathbb{R}^n),\quad 
	H^{-r}(\Omega)= \tilde{H}^r(\Omega)^*.$$
	
	We use $C([0, T]; X)$ (resp., $C^\alpha([0, T]; X)$)
	to denote the space consisting of the corresponding Banach space-valued continuous (resp., $C^\alpha$-continuous) functions on $[0, T]$. 
	$L^2(0, T; X)$ (resp., $H^1(0, T; X)$) denotes the space consisting of the corresponding Banach space-valued $L^2$-functions (resp., $H^1$-functions).

	\subsection{Riemann-Liouville and Caputo derivatives}
	Throughout this paper we use the notation
	$$\phi_\alpha(t):= \frac{t^{\alpha-1}}{\Gamma(\alpha)},\qquad t>0$$
	where $\Gamma$ is the standard Gamma function. It is straightforward to verify that
	\begin{equation}\label{alpha1}
		\phi_\alpha * \phi_{1-\alpha}= 1,\qquad t>0.
	\end{equation}
	
	The left Riemann-Liouville fractional integral of order $\alpha$ is defined by
	$$I^\alpha_{0,t}u:= \phi_\alpha* u$$
	and the left Riemann-Liouville fractional derivative of order $\alpha$ is defined by
	$$D^\alpha_{0,t}u:= \partial_t(I^{1-\alpha}_{0,t}u)
	= \partial_t(\phi_{1-\alpha}* u).$$
	Correspondingly, the right Riemann-Liouville fractional integral of order $\alpha$ is defined by
	$$I^\alpha_{t,T}u:= \frac{1}{\Gamma(\alpha)}\int^T_t (\tau- t)^{\alpha-1}u(\tau)\,d\tau$$
	and the right Riemann-Liouville fractional derivative of order $\alpha$ is defined by
	$$D^\alpha_{t,T}u:= -\partial_t(I^{1-\alpha}_{t,T}u).$$
	
	The (left) Caputo fractional derivative of order $\alpha$ is defined by
	$$\partial^\alpha_t u(t):= D^\alpha_{0,t}(u(t)- u(0)).$$
	It is straightforward to verify that $\partial^\alpha_t u= I^{1-\alpha}_{0,t}(\partial_t u)$ if $u$ is sufficiently regular.
	
	For (scalar-valued) functions $f(t), g(t)$, we have the following integration by parts formula (see, for instance, (3.6) in \cite{Warma2019approx}) 
	\begin{equation}\label{fracibp}
		\int^T_0 g\partial^\alpha_t f= \int^T_0 f D^\alpha_{t,T}g + 
		(f I^{1-\alpha}_{t,T}g)|^{t= T}_{t= 0}.
	\end{equation}
	Here the value of $I^{1-\alpha}_{t,T}g$ at $t=T$ should be interpreted in the limit sense. We also have the following inequality
	(see, for instance, Lemma 2.1 in \cite{vergara2017stability})
	\begin{equation}\label{Hconvex}
		\partial^\alpha_t(H(f(t))\leq H'(f(t))\partial^\alpha_t f(t)
	\end{equation}
	where $H\in C^1(\mathbb{R})$ is convex. We note that both (\ref{fracibp})
	and (\ref{Hconvex}) can be generalized for Banach space-valued $f(t), g(t)$
	if we replace the pointwise product by the distributional pairing.
	
	Let $L$ be a sectorial operator (i.e. a closed, densely defined linear operator which generates an analytic semigroup) in $X$. For sufficiently regular $f$, we can take Laplace transform to show that 
	\begin{equation}\label{SPformula}
		u(t)= S_{\alpha}(t)u_0+ \int^t_0 P_{\alpha}(t-\tau)f(\tau)\,d\tau
	\end{equation}
	satisfies the continuity at $t=0$, and this formula gives the solution of
	\begin{equation}\label{CdeL}
		\partial^\alpha_t u= Lu+ f,\qquad u(0)= u_0;
	\end{equation}
	We can also take Laplace transform to show that 
	\begin{equation}\label{PPformula}
		u(t)= P_{\alpha}(t)h_0+ \int^t_0 P_{\alpha}(t-\tau)f(\tau)\,d\tau
	\end{equation}
	satisfies that $I^\alpha_{0,t}u$ is continuous at $t=0$, and this formula gives the solution of
	\begin{equation}\label{RLdeL}
		D^\alpha_{0,t} u= Lu+ f,\qquad I^\alpha_{0,t}u(0)= h_0.
	\end{equation}
	Here $\{S_{\alpha}(t)\}$, $\{P_{\alpha}(t)\}$ are the resolvent families associated with $L$, which can be expressed by integrals involving the semigroup $\{S(t)\}$
	generated by $L$ and the Wright type function $\Phi_\alpha(t)$. Since we are not going to use the explicit expressions of $S_{\alpha}(t), P_{\alpha}(t)$, we refer interested readers to (2.1.9) in \cite{gal2020fractional} for details.
	
	\subsection{Space-fractional operators}
	It is well-known that the fractional Laplacian $(-\Delta)^s$ can be defined by
	
	\begin{equation}\label{fracLa}
		(-\Delta)^s u(x):= c_{n, s}\lim_{\epsilon\to 0^+}
		\int_{\mathbb{R}^n\setminus B_\epsilon(x)}\frac{u(x)-u(y)}{|x-y|^{n+2s}}\,dy
	\end{equation}
	and for each $r\in \mathbb{R}$, we have
	$$(-\Delta)^s: H^r(\mathbb{R}^n)\to H^{r-2s}(\mathbb{R}^n).$$
	
	We will see that many properties of the fractional Laplacian are preserved under the perturbation by the magnetic potential $A$.
	Throughout this paper we assume 
	$$A\in C([0, T]; L^\infty(\mathbb{R}^n)),\qquad\mathrm{supp}\,A(t)\subset \Omega$$
	for each $t\in [0,T]$ and 
	$q\in C([0, T]; L^\infty(\Omega))$. 
	
	It has been shown that (see (1) and (6) in \cite{li2021fractional}) for $u, v\in H^s(\mathbb{R}^n)$,
	\begin{equation}\label{biRsAt}
		\langle \mathcal{R}^s_{A(t)}u, v\rangle:= 
		2\int_{\mathbb{R}^n}\int_{\mathbb{R}^n}(u(x)- R_{A(t)}(x, y)u(y))v(x)K(x,y)\,dxdy,
	\end{equation}
	and $\mathcal{R}^s_{A(t)}$ is symmetric:
	\begin{equation}\label{RsAsym}
		\langle \mathcal{R}^s_{A(t)}u, v\rangle
		= \langle \mathcal{R}^s_{A(t)}v, u\rangle.
	\end{equation}
	
	We define the bilinear form $B_t$
	associated with $A, q$ by
	\begin{equation}\label{tdbf}
		B_t[u, v]:= \langle \mathcal{R}^s_{A(t)}u, v\rangle+ \int_\Omega q(t)uv,\qquad
		t\in [0, T].
	\end{equation}
	
	\begin{lemma}
		$B_t$ is bounded:
		\begin{equation}\label{Bbdd}
			|B_t[u,v]|\leq C_0||u||_{H^s}||v||_{H^s},\qquad u, v\in H^s(\mathbb{R}^n).
		\end{equation}
		$B_t$ is coercive:
		\begin{equation}\label{Bcoer}
			B_t[u, u]\geq c_1||u||^2_{H^s}- c_2||u||^2_{L^2},\qquad 
			u\in \tilde{H}^s(\Omega).
		\end{equation}
		Here $C_0, c_1, c_2$ do not depend on $t$.
	\end{lemma}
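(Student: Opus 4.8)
The plan is to treat $\mathcal{R}^s_{A(t)}$ as a bounded perturbation of the fractional Laplacian. Splitting the integrand in (\ref{biRsAt}) as
$$u(x)- R_{A(t)}(x,y)u(y)= (u(x)- u(y))+ (1- R_{A(t)}(x,y))u(y),$$
I would write $\langle\mathcal{R}^s_{A(t)}u, v\rangle= \langle(-\Delta)^s u, v\rangle+ P_t[u,v]$. Here the first piece is exactly the Gagliardo bilinear form of $(-\Delta)^s$: it is (\ref{biRsAt}) with $A\equiv 0$, and symmetrizing in $x\leftrightarrow y$ (using $K(x,y)=K(y,x)$) turns $2\int\int(u(x)-u(y))v(x)K$ into $\int\int(u(x)-u(y))(v(x)-v(y))K$. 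The remainder is the magnetic correction
$$P_t[u,v]:= 2\int_{\mathbb{R}^n}\int_{\mathbb{R}^n}(1- R_{A(t)}(x,y))u(y)v(x)K(x,y)\,dxdy.$$
Since $(-\Delta)^s$ satisfies $|\langle(-\Delta)^su,v\rangle|\le C\|u\|_{H^s}\|v\|_{H^s}$, and since $\langle(-\Delta)^su,u\rangle= \||\xi|^s\widehat u\|^2_{L^2}\ge 2^{-s}\|u\|^2_{H^s}- \|u\|^2_{L^2}$ (from $(1+|\xi|^2)^s\le 2^s(1+|\xi|^{2s})$), both (\ref{Bbdd}) and (\ref{Bcoer}) will follow once I show that $P_t$ and the electric term are both dominated by $\|u\|_{L^2}\|v\|_{L^2}$, so that they are absorbed into $C_0$ and into $-c_2\|u\|^2_{L^2}$ respectively.

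The heart of the argument is the $L^2$-boundedness of the kernel $M_t(x,y):= (1- R_{A(t)}(x,y))K(x,y)$ governing $P_t$, which I would obtain from Schur's test. Because $R_{A(t)}(x,y)=\cos((x-y)\cdot A(\tfrac{x+y}{2},t))$, the elementary bound $|1-\cos\theta|\le\min(2,\tfrac12\theta^2)$ with $|\theta|\le|x-y|\,\|A(t)\|_{L^\infty}$ yields the two-regime estimate
$$|M_t(x,y)|\le c_{n,s}\min\!\left(2,\ \tfrac12\|A(t)\|^2_{L^\infty}|x-y|^2\right)|x-y|^{-n-2s}.$$
Near the diagonal this is $\lesssim\|A(t)\|^2_{L^\infty}|x-y|^{2-n-2s}$, which is integrable precisely because $s<1$ forces $2-n-2s>-n$; away from the diagonal it is $\lesssim|x-y|^{-n-2s}$, integrable at infinity. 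Both contributions are translation invariant, so $\sup_{x}\int_{\mathbb{R}^n}|M_t(x,y)|\,dy\le C(1+\|A(t)\|^2_{L^\infty})$, and since $M_t$ is symmetric ($K$ and $R_{A(t)}$ both are), the other Schur integral is identical. Schur's test then gives $|P_t[u,v]|\le C(1+\|A(t)\|^2_{L^\infty})\|u\|_{L^2}\|v\|_{L^2}$.

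For the electric term, $\big|\int_\Omega q(t)uv\big|\le \|q(t)\|_{L^\infty}\|u\|_{L^2}\|v\|_{L^2}$ is immediate (for $u\in\tilde H^s(\Omega)$ the $L^2(\Omega)$ and $L^2(\mathbb{R}^n)$ norms agree), so combining with the two displays above proves boundedness, while for $v=u$ the lower bound $\langle(-\Delta)^su,u\rangle\ge 2^{-s}\|u\|^2_{H^s}-\|u\|^2_{L^2}$ minus the two $L^2$-controlled terms gives coercivity with $c_1=2^{-s}$. The uniformity of $C_0,c_1,c_2$ in $t$ follows from $A\in C([0,T];L^\infty(\mathbb{R}^n))$ and $q\in C([0,T];L^\infty(\Omega))$: compactness of $[0,T]$ makes $\sup_{t}\|A(t)\|_{L^\infty}$ and $\sup_{t}\|q(t)\|_{L^\infty}$ finite. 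I expect the only genuinely analytic step to be the Schur estimate for $M_t$; the delicate point there is that the near-diagonal singularity stays integrable, which hinges on the quadratic vanishing of $1-\cos$ together with $s<1$, and that the resulting bound is uniform in both $x$ and $t$.
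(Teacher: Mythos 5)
Your proof is correct. The paper itself does not prove this lemma --- it simply defers to Section~2 of \cite{li2021fractional} --- but your argument (splitting $u(x)-R_{A(t)}(x,y)u(y)$ into the Gagliardo part plus the correction $(1-R_{A(t)}(x,y))u(y)$, and controlling the correction kernel via the quadratic vanishing $0\leq 1-R_{A(t)}(x,y)\leq C_A\min\{1,|x-y|^2\}$ together with Schur's test) is exactly the standard route, and it is the same computation the paper itself carries out in Section~3.1 to obtain the $L^2$ bound (\ref{L2bdd}) for $(-\Delta)^s-\mathcal{R}^s_{A(t)}$ via the generalized Young inequality. The only point worth flagging is cosmetic: the unsymmetrized Gagliardo integral $2\int\!\!\int(u(x)-u(y))v(x)K\,dxdy$ is not absolutely convergent near the diagonal for general $u,v\in H^s$, so the symmetrization in $x\leftrightarrow y$ should be performed on the truncated integrals over $\{|x-y|>\epsilon\}$ before passing to the limit (and the precise value of $c_1$ depends on the normalization of $c_{n,s}$ in (\ref{biRsAt}), which does not affect the statement).
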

	
	The estimates above have been proved in Section 2 in \cite{li2021fractional}.
	
	\begin{prop}
		Suppose $\Omega\subset B_r(0)$ for some $r> 0$, $W$ is a nonempty open set s.t. $$W\cap B_{3r}(0)= \emptyset.$$
		(i)\, For $g\in C^\infty_c(W\times (0, T))$, we have
		$$\mathcal{R}^s_A g|_{\Omega\times (0, T)}
		= (-\Delta)^s g|_{\Omega\times (0, T)};$$
		(ii) (Proposition 2.4 in \cite{li2021fractional}) If
		$$u\in \tilde{H}^s(\Omega),\qquad \mathcal{R}^s_{A(t)}u|_W= 0,$$
		then $u= 0$ in $\mathbb{R}^n$.
	\end{prop}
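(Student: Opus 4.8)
Part (ii) is quoted from Proposition 2.4 in \cite{li2021fractional}, so my plan is to invoke it directly; for orientation I note that it follows from the same midpoint reduction I am about to use (which shows $\mathcal{R}^s_{A(t)}u=(-\Delta)^s u$ on $W$ whenever $\mathrm{supp}\,u\subset\overline{\Omega}$) combined with the unique continuation property of the fractional Laplacian. The real work is part (i), and the whole force of the hypothesis $W\cap B_{3r}(0)=\emptyset$ is that it pushes the midpoint $\tfrac{x+y}{2}$ out of $\mathrm{supp}\,A$, collapsing the cosine factor in \eqref{cost} to $1$ and making $\mathcal{R}^s_A$ indistinguishable from $(-\Delta)^s$ on $\Omega$.

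First I would fix $t\in(0,T)$ and $x\in\Omega$ and turn the singular integral into an ordinary one. Since $\Omega\subset B_r(0)\subset B_{3r}(0)$ while $\mathrm{supp}\,g(\cdot,t)\subset W$ with $W\cap B_{3r}(0)=\emptyset$, the function $g(\cdot,t)$ vanishes on a full neighborhood of $x$; in particular $g(x,t)=0$, so the integrand in \eqref{RsAt} has no singularity at $y=x$ and the defining limit collapses to the absolutely convergent integral
\begin{equation*}
\mathcal{R}^s_{A(t)}g(x,t)=-\int_{\mathbb{R}^n}R_{A(t)}(x,y)\,g(y,t)\,K(x,y)\,dy,
\end{equation*}
to which only $y$ with $g(y,t)\neq0$, i.e.\ $y\in W$, contribute.

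Next I would control the midpoint. For such $y$ we have $|y|\geq 3r$, whereas $x\in\Omega\subset B_r(0)$ gives $|x|<r$, so by the reverse triangle inequality
\begin{equation*}
\Big|\frac{x+y}{2}\Big|\geq\frac{|y|-|x|}{2}>\frac{3r-r}{2}=r.
\end{equation*}
Hence $\tfrac{x+y}{2}\notin\overline{B_r(0)}\supset\mathrm{supp}\,A(t)$, which forces $A(\tfrac{x+y}{2},t)=0$ and, via \eqref{cost}, $R_{A(t)}(x,y)=\cos 0=1$ for every $y$ in the support of $g(\cdot,t)$.

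Substituting $R_{A(t)}(x,y)=1$ gives $\mathcal{R}^s_{A(t)}g(x,t)=-\int_{\mathbb{R}^n}g(y,t)\,K(x,y)\,dy$, and performing the identical reduction on the singular-integral definition \eqref{fracLa} of $(-\Delta)^s$---again using $g(x,t)=0$ to discard the diagonal term---produces exactly the same expression, so the two operators agree on $\Omega\times(0,T)$. I expect no genuine obstacle: the one delicate point is the strict inequality in the midpoint estimate, which is precisely the slack between radius $r$ and radius $3r$ that keeps $\tfrac{x+y}{2}$ outside $\overline{B_r(0)}$---and this is exactly why the apparently oversized separation $B_{3r}$, rather than $B_r$, is built into the hypotheses.
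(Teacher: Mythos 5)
Your argument for (i) is correct and is exactly the ``straightforward verification based on the support assumptions'' that the paper invokes: the midpoint estimate $|\tfrac{x+y}{2}|>\tfrac{3r-r}{2}=r$ forces $R_{A(t)}(x,y)=1$ on the support of $g(\cdot,t)$, and $g(x,t)=0$ for $x\in\Omega$ removes the singularity so both operators reduce to $-\int g(y,t)K(x,y)\,dy$. Your treatment of (ii) by citation, with the parenthetical remark that it follows from the analogous reduction on $W$ plus the unique continuation property of $(-\Delta)^s$, likewise matches the paper.
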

	It is straightforward to verify (i) based on the support assumptions on $A$ and $g$. (ii) is based on the following unique continuation property of $(-\Delta)^s$ (see Theorem 1.2 in \cite{ghosh2020calderon}). 
	\begin{prop}
		Let $u\in H^r(\mathbb{R}^n)$ for some $r\in \mathbb{R}$. Let $W\subset \mathbb{R}^n$ be nonempty and open. If $$(-\Delta)^su= u= 0\quad\text{in}\,\,W,$$
		then $u= 0$ in $\mathbb{R}^n$.
	\end{prop}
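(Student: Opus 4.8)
The plan is to convert this \emph{nonlocal} unique continuation statement into a \emph{local} one in one extra dimension by means of the Caffarelli--Silvestre extension, and then to invoke a Carleman-type unique continuation principle from the boundary. Taking $u\in H^s(\mathbb{R}^n)$ as the principal case, I would associate to $u$ its extension $\tilde u$ on the upper half-space $\mathbb{R}^{n+1}_+:=\mathbb{R}^n\times (0,\infty)$, defined as the finite-energy solution of the degenerate elliptic equation
\begin{equation}\label{CSext}
	\mathrm{div}_{x,y}\big(y^{1-2s}\nabla_{x,y}\tilde u\big)= 0\quad\text{in}\,\,\mathbb{R}^{n+1}_+,\qquad \tilde u(\cdot, 0)= u.
\end{equation}
Since the weight $y^{1-2s}$ belongs to the Muckenhoupt class $A_2$ (recall $0< s< 1$), equation \eqref{CSext} is well posed in the associated weighted energy space and the trace condition makes sense. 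The key identity is that the fractional Laplacian is recovered as a weighted co-normal derivative,
\begin{equation}\label{conormal}
	(-\Delta)^s u(x)= -c\lim_{y\to 0^+}y^{1-2s}\partial_y\tilde u(x, y),
\end{equation}
for a positive dimensional constant $c$.

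With \eqref{conormal} in hand, the two hypotheses $u|_W= 0$ and $(-\Delta)^s u|_W= 0$ translate precisely into the vanishing of both the Dirichlet trace and the weighted co-normal trace of $\tilde u$ on the open boundary patch $W\times\{0\}$. The heart of the matter is then a unique continuation principle from the boundary for the operator in \eqref{CSext}: a solution whose full Cauchy data vanish on an open subset of $\{y= 0\}$ must vanish in a half-ball touching that subset. I expect this to be the main obstacle, because the weight $y^{1-2s}$ degenerates (or blows up) as $y\to 0^+$, so the classical Carleman estimates for uniformly elliptic operators do not apply directly; one must instead use the weighted Carleman estimates tailored to $A_2$-degenerate equations. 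Granting this boundary unique continuation result, $\tilde u$ vanishes on a half-ball centered on $W\times\{0\}$; in particular $\tilde u= 0$ on an open subset of the open half-space $\{y> 0\}$, where \eqref{CSext} is a genuinely (non-degenerately) elliptic equation with coefficients real-analytic in $y$. Weak unique continuation for this interior equation then propagates the vanishing, via a chain-of-balls argument exploiting the connectedness of $\mathbb{R}^{n+1}_+$, to conclude $\tilde u\equiv 0$ throughout $\mathbb{R}^{n+1}_+$.

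Restricting to the boundary gives $u= \tilde u(\cdot, 0)= 0$ in $\mathbb{R}^n$, which is the desired conclusion. Finally, to cover an arbitrary regularity index $r\in\mathbb{R}$: when $r\geq s$ one has $H^r(\mathbb{R}^n)\subset H^s(\mathbb{R}^n)$ and the argument above applies verbatim, while for rough $u$ (that is, $r< s$) I would use the fact that the extension can be realized as $\tilde u(\cdot, y)= P_y* u$ against the Poisson-type kernel $P_y$, which is smoothing for each fixed $y> 0$; thus $\tilde u$ is smooth in $\{y> 0\}$ regardless of the roughness of $u$, the vanishing of the Cauchy data on $W\times\{0\}$ is interpreted in the appropriate trace sense, and the same boundary-to-interior unique continuation scheme goes through (this is the reduction carried out in \cite{ghosh2020calderon}).
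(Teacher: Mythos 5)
The paper does not prove this proposition itself but cites it as Theorem 1.2 of \cite{ghosh2020calderon}, and your sketch is precisely the argument given there: the Caffarelli--Silvestre extension converts the hypotheses into vanishing Cauchy data for a degenerate elliptic equation with $A_2$ weight $y^{1-2s}$, boundary unique continuation via weighted Carleman estimates (R\"uland's work) kills $\tilde u$ near $W\times\{0\}$, and interior unique continuation plus connectedness propagates the vanishing, with the general $r\in\mathbb{R}$ case handled through the smoothing Poisson-kernel realization of the extension. Your proposal is correct and coincides with the cited proof; the one step you rightly flag as nontrivial (boundary UCP for the degenerate extension problem) is exactly the technical core of that reference.
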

	
	\section{Linear problem}
	\subsection{Forward problem}
	We will study (\ref{Cafracspacetime}) as well as the related problem
	\begin{equation}\label{RLfracspacetime}
		\left\{
		\begin{aligned}
			D^\alpha_{0,t} u+ \mathcal{R}^s_{A(t)} u + q(t)u&= 0
			\quad \,\,\,\Omega\times (0, T)\\
			u&= g\quad \,\,\,\Omega_e\times (0, T)\\
			I^\alpha_{0,t}u&= 0\quad \,\,\,\Omega\times \{0\}
		\end{aligned}
		\right.
	\end{equation}
	for $g\in C^\infty_c(\Omega_e\times (0, T))$.
	
	By using the substitution $u= w+g$,
	(\ref{Cafracspacetime}) can be converted into the initial value problem
	\begin{equation}\label{Caivp}
		\left\{
		\begin{aligned}
			\partial^\alpha_t w+ \mathcal{R}^s_{A(t)} w + q(t)w&= f
			\quad \,\,\,\Omega\times (0, T)\\
			w&= 0\quad \,\,\,\Omega\times \{0\}
		\end{aligned}
		\right.
	\end{equation}
	and (\ref{RLfracspacetime}) can be converted into the (integral) initial value problem
	\begin{equation}\label{RLivp}
		\left\{
		\begin{aligned}
			D^\alpha_{0,t} w+ \mathcal{R}^s_{A(t)} w + q(t)w&= f
			\quad \,\,\,\Omega\times (0, T)\\
			I^\alpha_{0,t}w&= 0\quad \,\,\,\Omega\times \{0\}
		\end{aligned}
		\right.
	\end{equation}
	
	Based on Lemma 2.1 in Subsection 2.3 and Theorem 3.1 in \cite{zacher2009weak}, we have the following well-posedness result.
	\begin{prop}
		Suppose $f\in L^2(0, T; H^{-s}(\Omega))$. 
		Then (\ref{RLivp}) has a unique solution satisfying
		$$w\in L^2(0, T; \tilde{H}^s(\Omega)),\qquad
		I^\alpha_{0,t}w\in H^1(0, T; H^{-s}(\Omega)).$$
		This actually implies 
		$I^\alpha_{0,t}w\in C([0,T]; L^2(\Omega))$ (see Theorem 2.1 in \cite{zacher2009weak}).
	\end{prop}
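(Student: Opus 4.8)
The plan is to establish well-posedness of the integral initial value problem \eqref{RLivp} by recasting it as an abstract Volterra equation on the Gelfand triple $\tilde{H}^s(\Omega)\hookrightarrow L^2(\Omega)\hookrightarrow H^{-s}(\Omega)$ and then invoking the general existence-uniqueness theory for abstract fractional evolution equations. First I would verify that the bilinear form $B_t$ from \eqref{tdbf} satisfies the hypotheses required by the abstract framework: by Lemma 2.1, $B_t$ is bounded (with constant $C_0$ independent of $t$) and satisfies the G\aa rding-type coercivity \eqref{Bcoer} (with $c_1,c_2$ independent of $t$). I would also note that $t\mapsto B_t[u,v]$ is continuous for fixed $u,v$, which follows from the assumed regularity $A\in C([0,T];L^\infty(\mathbb{R}^n))$ and $q\in C([0,T];L^\infty(\Omega))$ entering through \eqref{cost} and \eqref{biRsAt}. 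These are precisely the structural conditions on a time-dependent, bounded, coercive form that the abstract theory requires.

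The second step is to identify \eqref{RLivp} as the weak form of an abstract problem to which Theorem 3.1 in \cite{zacher2009weak} applies. Writing $\mathcal{A}(t)\colon \tilde{H}^s(\Omega)\to H^{-s}(\Omega)$ for the operator induced by $B_t$, namely $\langle \mathcal{A}(t)u,v\rangle = B_t[u,v]$, the equation $D^\alpha_{0,t}w + \mathcal{A}(t)w = f$ with the homogeneous integral initial condition $I^\alpha_{0,t}w|_{t=0}=0$ is exactly the type of linear fractional-in-time problem covered by Zacher's weak-solution theory. The coercivity \eqref{Bcoer} supplies the energy estimate that drives the a priori bound, and the shift by $c_2\|u\|_{L^2}^2$ is harmless because one can absorb the zeroth-order term $c_2 w$ into the right-hand side or handle it via an exponential-in-time reweighting, as is standard for G\aa rding inequalities. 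Applying the abstract theorem then yields a unique weak solution with the asserted regularity $w\in L^2(0,T;\tilde{H}^s(\Omega))$ and $I^\alpha_{0,t}w\in H^1(0,T;H^{-s}(\Omega))$.

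The final step is the continuity-in-time claim $I^\alpha_{0,t}w\in C([0,T];L^2(\Omega))$. Having placed the solution in the intersection class $L^2(0,T;\tilde{H}^s(\Omega))$ with $I^\alpha_{0,t}w\in H^1(0,T;H^{-s}(\Omega))$, I would apply the trace/interpolation result recorded as Theorem 2.1 in \cite{zacher2009weak}, which is the fractional analogue of the classical Lions--Magenes embedding $L^2(0,T;V)\cap H^1(0,T;V^*)\hookrightarrow C([0,T];H)$ for a Gelfand triple $V\hookrightarrow H\hookrightarrow V^*$. With $V=\tilde{H}^s(\Omega)$, $H=L^2(\Omega)$, $V^*=H^{-s}(\Omega)$, this gives the desired continuous $L^2$-valued representative for $I^\alpha_{0,t}w$, so that the initial condition is attained in a genuine pointwise-in-time sense.

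The main obstacle is checking that the pair $(B_t, f)$ with $f\in L^2(0,T;H^{-s}(\Omega))$ meets the precise hypotheses of Zacher's theorem, rather than producing fresh estimates: the coercivity only holds on $\tilde{H}^s(\Omega)$ and only up to the $c_2\|w\|_{L^2}^2$ defect, so I expect the bookkeeping to verify that the abstract theory tolerates this defect (and the time-dependence of $\mathcal{A}(t)$) to be the delicate part. Since both \eqref{Bbdd}--\eqref{Bcoer} and the relevant abstract results are quoted as established, the proof is essentially a matter of matching notation and confirming that the uniform-in-$t$ constants guarantee the hypotheses hold on all of $[0,T]$.
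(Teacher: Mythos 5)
Your proposal is correct and follows essentially the same route as the paper: the paper proves this proposition precisely by combining Lemma 2.1 (boundedness and the G\aa rding-type coercivity of $B_t$, uniform in $t$) with Theorem 3.1 of \cite{zacher2009weak} for abstract time-fractional problems on the Gelfand triple $\tilde{H}^s(\Omega)\hookrightarrow L^2(\Omega)\hookrightarrow H^{-s}(\Omega)$, and obtains the continuity $I^\alpha_{0,t}w\in C([0,T];L^2(\Omega))$ from Theorem 2.1 there. Your additional remarks on absorbing the $c_2\|w\|_{L^2}^2$ defect and on the time-continuity of $t\mapsto B_t$ are sensible elaborations of hypotheses the paper leaves implicit.
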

	
	We note that
	$$0\leq 1- R_{A(t)}(x,y)= 
	2\sin^2(\frac{1}{2}(x-y)\cdot A(\frac{x+y}{2}, t))\leq 
	C_A\min\{1, |x-y|^2\}$$
	and (based on (\ref{fracLa}) and (\ref{RsAt})) we have
	$$|((-\Delta)^s- \mathcal{R}^s_{A(t)}) u(x)|\leq
	\int (1- R_{A(t)}(x, y))K(x, y)|u(y)|dy.$$
	Since we have
	$$\int (1- R_{A(t)}(x, y))K(x, y)dy\leq C,\qquad x\in \mathbb{R}^n,$$
	by the generalized Young’s inequality (see, for instance, Proposition 0.10 in \cite{folland1995introduction}), we get 
	\begin{equation}\label{L2bdd}
		||((-\Delta)^s- \mathcal{R}^s_{A(t)})u||_{L^2(\mathbb{R}^n)}\leq 
		C||u||_{L^2(\mathbb{R}^n)}.
	\end{equation}

	Now we know that the operator ${R}^s_{A(t)} + q(t)$
	can be written as 
	$$(-\Delta)^s+ F(t,\cdot)$$
	where the linear operator
	$$F(t,\cdot):=
	\mathcal{R}^s_{A(t)}- (-\Delta)^s+ q(t)$$
	is $L^2$ bounded (uniformly in $t$). 
	
	We also note that for sufficiently regular $f$, both (\ref{RLivp}) and (\ref{Caivp}) can be written as the integral equation (see (\ref{SPformula}) and (\ref{PPformula}))
	$$w(t)= \int^t_0 P_{\alpha}(t-\tau)(f(\tau)- F(\tau, w(\tau))\tau\,d\tau.$$
	Here $\{P_\alpha(t)\}$ is the resolvent family associated with
	the sectorial operator
	$$L: w\to -(-\Delta)^s w|_\Omega,\qquad D(L)= \{w\in \tilde{H}^s(\Omega):
	(-\Delta)^s w|_\Omega\in L^2(\Omega)\}.$$
	
	Based on Theorem 1.1 in \cite{kojima2021existence}, we have the following well-posedness result.
	\begin{prop}
		Let $f\in C^\alpha([0, T]; L^2(\Omega))$
		Then (\ref{Caivp}) (or (\ref{RLivp})) has a unique solution
		$$w\in C([0,T]; L^2(\Omega)).$$
	\end{prop}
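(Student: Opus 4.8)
The plan is to exploit the observation, recorded just before the statement, that both (\ref{Caivp}) and (\ref{RLivp}) are equivalent to one and the same Volterra integral equation
\[
w(t)=\int_0^t P_\alpha(t-\tau)\bigl(f(\tau)-F(\tau)w(\tau)\bigr)\,d\tau,
\]
obtained from the representation formula (\ref{SPformula}) with $u_0=0$ in the Caputo case and from (\ref{PPformula}) with $h_0=0$ in the Riemann--Liouville case. Since the two problems collapse to the identical fixed-point equation, it suffices to solve this single equation in the space $X_T:=C([0,T];L^2(\Omega))$; this is precisely the situation covered by Theorem 1.1 in \cite{kojima2021existence}, so the core of the argument is to verify that its hypotheses hold in our setting.

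The key steps I would carry out are the following. First, recall that $L$ is sectorial on $L^2(\Omega)$, so the associated resolvent family $\{P_\alpha(t)\}$ exists and obeys the standard bound $\|P_\alpha(t)\|_{L^2\to L^2}\le C\,t^{\alpha-1}$; in particular the convolution kernel is integrable in time. Second, I would check that the coefficient operator $F(t,\cdot)=\mathcal{R}^s_{A(t)}-(-\Delta)^s+q(t)$ is an admissible perturbation: by (\ref{L2bdd}) together with the assumptions $A\in C([0,T];L^\infty(\mathbb{R}^n))$ and $q\in C([0,T];L^\infty(\Omega))$, it is linear, uniformly bounded on $L^2(\Omega)$, and continuous in $t$, hence globally Lipschitz in $w$ with a time-independent constant. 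Third, I would set up the map $\mathcal{T}w(t):=\int_0^t P_\alpha(t-\tau)(f(\tau)-F(\tau)w(\tau))\,d\tau$ and use the integrable singularity of $P_\alpha$ together with the Lipschitz bound on $F$ to show that $\mathcal{T}$ maps $X_T$ into itself and is a contraction, either after passing to a weighted sup-norm in $t$ or by contracting on a short interval and then continuing the solution across $[0,T]$ via the uniform-in-$t$ estimates. The unique fixed point is then the desired solution $w\in C([0,T];L^2(\Omega))$, and uniqueness for the original problems follows from their equivalence with the integral equation.

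The step I expect to be the main obstacle is establishing genuine continuity of the solution up to $t=0$, with the correct value $w(0)=0$. Because the resolvent bound $t^{\alpha-1}$ is only barely integrable, a merely bounded and measurable source would produce a solution controlled in an $L^p$-in-time sense but not necessarily continuous at the origin; it is exactly the H\"older hypothesis $f\in C^\alpha([0,T];L^2(\Omega))$ that compensates for this singularity and upgrades the solution to a continuous $L^2$-valued curve attaining the prescribed initial data, in close analogy with the classical Schauder-type regularity theory for parabolic problems. Verifying this continuity --- which is the technical heart of the cited existence theorem --- is where the $C^\alpha$ assumption is indispensable, and it is the reason the statement is phrased for H\"older-continuous rather than merely integrable $f$.
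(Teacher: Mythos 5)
Your proposal is correct and follows essentially the same route as the paper: the paper likewise reduces both (\ref{Caivp}) and (\ref{RLivp}) to the single Volterra equation $w(t)=\int_0^t P_\alpha(t-\tau)(f(\tau)-F(\tau,w(\tau)))\,d\tau$ with $F$ uniformly $L^2$-bounded by (\ref{L2bdd}), and then simply invokes Theorem 1.1 of \cite{kojima2021existence}. You have merely unpacked the contraction-mapping and H\"older-regularity considerations that the paper delegates to that citation.
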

	We remark that the well-posedness result above actually holds true for the general Lipschitz type nonlinear map $F(t,\cdot)$ and the regularity assumption on $f$ can be weakened. We refer readers to \cite{kojima2021existence} for more details.
	
	In the rest of this section, we will assume $\Omega\subset B_r(0)$ for some constant $r> 0$ and $W$ is a nonempty open set in $\mathbb{R}^n$ s.t. $W\cap B_{3r}(0)= \emptyset$ unless otherwise stated.
	We note that for $g\in C^\infty_c(W\times (0, T))$, by Proposition 2.2 (i)
	we have
	$$f:= -\mathcal{R}^s_A g|_{\Omega\times (0, T)}
	= -(-\Delta)^s g|_{\Omega\times (0, T)},$$
	which is smooth over $\bar{\Omega}\times [0,T]$.
	
	Based on Proposition 3.1 and 3.2, we have the following well-posedness result
	\begin{prop}
		Let $g\in C^\infty_c(W\times (0, T))$
		Then (\ref{Cafracspacetime}) 
		(or (\ref{RLfracspacetime})) has a unique solution $u$ with $w:= u-g$ satisfying
		$$w\in L^2(0, T; \tilde{H}^s(\Omega))\cap C([0,T]; L^2(\Omega)),\qquad
		I^\alpha_{0,t}w\in H^1(0, T; H^{-s}(\Omega))\cap C([0,T]; L^2(\Omega)).$$
	\end{prop}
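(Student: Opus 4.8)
The plan is to assemble Proposition 3.4 by combining the two well-posedness results already established. The key reduction, which the paper has already carried out in the preceding paragraphs, is the substitution $u = w + g$ that converts the exterior problem (\ref{Cafracspacetime}) (resp. (\ref{RLfracspacetime})) into the initial value problem (\ref{Caivp}) (resp. (\ref{RLivp})) with source $f := -\mathcal{R}^s_A g|_{\Omega \times (0,T)}$. The crucial observation, coming from Proposition 2.2 (i) together with the support assumption $W \cap B_{3r}(0) = \emptyset$, is that for $g \in C^\infty_c(W \times (0,T))$ the source simplifies to $f = -(-\Delta)^s g|_{\Omega \times (0,T)}$, which is smooth on $\bar\Omega \times [0,T]$. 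Thus $f$ enjoys far more regularity than the bare minimum required by either preceding proposition.

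First I would verify that this smooth $f$ meets the hypotheses of both Proposition 3.1 and Proposition 3.2 simultaneously. Since $f$ is smooth on the compact set $\bar\Omega \times [0,T]$, it certainly lies in $C^\alpha([0,T]; L^2(\Omega))$, so Proposition 3.2 applies and yields a unique solution $w \in C([0,T]; L^2(\Omega))$. Likewise $f \in L^2(0,T; L^2(\Omega)) \subset L^2(0,T; H^{-s}(\Omega))$, so Proposition 3.1 applies and yields a solution with $w \in L^2(0,T; \tilde{H}^s(\Omega))$ and $I^\alpha_{0,t}w \in H^1(0,T; H^{-s}(\Omega))$, the latter space embedding into $C([0,T]; L^2(\Omega))$ by the parenthetical remark in Proposition 3.1. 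Then I would invoke uniqueness: both propositions produce a solution of the same integral equation $w(t) = \int_0^t P_\alpha(t-\tau)(f(\tau) - F(\tau, w(\tau)))\,d\tau$, and uniqueness in each forces the two solutions to coincide. Hence the single function $w$ inherits all the regularity properties from both results at once, giving $w \in L^2(0,T;\tilde{H}^s(\Omega)) \cap C([0,T]; L^2(\Omega))$ and $I^\alpha_{0,t}w \in H^1(0,T; H^{-s}(\Omega)) \cap C([0,T]; L^2(\Omega))$, which is exactly the claimed conclusion. Finally $u = w + g$ solves the original exterior problem, and uniqueness of $u$ follows from uniqueness of $w$.

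The step I expect to require the most care is the reconciliation of the two solution notions. Proposition 3.1 is a weak solution result drawn from \cite{zacher2009weak}, phrased through the energy spaces and the integral initial condition, whereas Proposition 3.2 is a mild/strong solution result from \cite{kojima2021existence} phrased through the resolvent family $\{P_\alpha(t)\}$. To merge them cleanly one must check that both are in fact solutions of the common integral equation displayed just before the statement, so that the respective uniqueness assertions apply to the same object; this is the place where the two frameworks must be shown to be compatible rather than merely coexisting. Once that compatibility is granted, the argument is essentially a bookkeeping exercise of intersecting the regularity classes, and the smoothness of $f$ ensures no hypothesis of either proposition is violated.
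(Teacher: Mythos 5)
Your proposal is correct and follows essentially the same route as the paper, which likewise obtains Proposition 3.4 by combining Propositions 3.1 and 3.2 after the substitution $u=w+g$ and the observation (via Proposition 2.2 (i)) that $f=-(-\Delta)^s g|_{\Omega\times(0,T)}$ is smooth. Your explicit attention to reconciling the weak-solution and mild-solution frameworks through the common integral equation is exactly the point the paper leaves implicit, and it is handled correctly.
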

	
	We denote the solution operator $g\to u_g$ associated with (\ref{Cafracspacetime}) by $P_{A, q}$. We note that we have the same 
	well-posedness result for the dual problem
	\begin{equation}\label{dualRLfracspacetime}
		\left\{
		\begin{aligned}
			D^\alpha_{t,T} u+ \mathcal{R}^s_{A(t)} u + q(t)u&= 0
			\quad \,\,\,\Omega\times (0, T)\\
			u&= g\quad \,\,\,\Omega_e\times (0, T)\\
			I^\alpha_{t,T}u&= 0\quad \,\,\,\Omega\times \{T\}
		\end{aligned}
		\right.
	\end{equation}
	and we denote the associated solution operator by $P^*_{A, q}$.
	
	\subsection{Inverse problem}
	\subsubsection{DN map}
	Recall that we defined the Dirichlet-to-Neumann map $\Lambda_{A, q}$ associated with (\ref{Cafracspacetime}) in (\ref{tDN}). We  also define
	the Dirichlet-to-Neumann map $\Lambda^*_{A, q}$ associated with
	(\ref{dualRLfracspacetime}) by 
	\begin{equation}\label{dualtDN}
		\Lambda^*_{A, q}g:= \mathcal{R}^s_A (P^*_{A, q}g)|_{\Omega_e\times (0, T)}.
	\end{equation}
	The well-posed results ensure that $\Lambda_{A, q}$ and $\Lambda^*_{A, q}$
	are well-defined at least for 
	$g\in C^\infty_c(W\times (0, T))$.
	
	Let $g\in C^\infty_c(W_1\times (0, T))$ and
	$h\in C^\infty_c(W_2\times (0, T))$.
	
	By the definition of $B_t$, $P_{A, q}$ and $\Lambda_{A, q}$ we have
	$$\int^T_{0}\langle \Lambda_{A, q}g(t), h(t)\rangle\,dt
	= \int^T_{0}\langle \mathcal{R}^s_{A(t)}u(t), \tilde{h}(t)\rangle\,dt
	- \int^T_{0}\langle \mathcal{R}^s_{A(t)}u(t), \tilde{h}(t)- h(t)\rangle\,dt$$
	$$=  \int^T_{0}\langle \mathcal{R}^s_{A(t)}u(t), \tilde{h}(t)\rangle\,dt+
	\int^T_{0}\langle \partial^\alpha_t u(t)+ q(t)u(t), \tilde{h}(t)- h(t)\rangle\,dt$$
	\begin{equation}\label{DNint1}
		= \int^T_{0}B_t[u(t), \tilde{h}(t)]\,dt
		+ \int^T_{0}\langle\partial^\alpha_t u(t), \tilde{h}(t)\rangle_\Omega\,dt\
	\end{equation}
	for any $\tilde{h}$ satisfying 
	$\tilde{h}- h\in L^2(0, T; \tilde{H}^s(\Omega))$.
	Here $u:= P_{A, q}g$, $w:= u- g$ and 
	$$\langle\partial_t u(t), \tilde{h}(t)\rangle_\Omega=
	\langle\partial_t w(t), \tilde{h}(t)- h(t)\rangle.$$
	
	Similarly we have
	\begin{equation}\label{DNint2}
		\int^T_{0}\langle \Lambda^*_{A, q}h(t), g(t)\rangle\,dt
		= \int^T_{0}B_t[u^*(t), \tilde{g}(t)]\,dt
		+ \int^T_{0}\langle D^\alpha_{t,T}u^*(t), \tilde{g}(t)\rangle_\Omega\,dt
	\end{equation}
	where $u^*:= P^*_{A, q}h$ for any $\tilde{g}$ satisfying 
	$\tilde{g}- g\in L^2(0, T; \tilde{H}^s(\Omega))$.
	
	\begin{prop}
		For $g\in C^\infty_c(W_1\times (0, T))$ and
		$h\in C^\infty_c(W_2\times (0, T))$, we have
		\begin{equation}\label{dualsym}
			\int^T_{0}\langle \Lambda_{A, q}g(t), h(t)\rangle\,dt
			= \int^T_{0}\langle \Lambda^*_{A, q}h(t), g(t)\rangle\,dt.
		\end{equation}
	\end{prop}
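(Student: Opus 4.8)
The plan is to use each Dirichlet-to-Neumann map's \emph{dual} solution as the admissible test function in the integral identities \eqref{DNint1} and \eqref{DNint2}, and then to reconcile the two resulting expressions using the symmetry of $B_t$ together with the fractional integration by parts formula \eqref{fracibp}. Write $u := P_{A,q}g$, $u^* := P^*_{A,q}h$, $w := u - g$ and $w^* := u^* - h$. In \eqref{DNint1} I would choose $\tilde h := u^*$: this is admissible since the well-posedness of the dual problem gives $w^* \in L^2(0,T; \tilde{H}^s(\Omega))$, which is precisely the requirement $\tilde h - h \in L^2(0,T; \tilde{H}^s(\Omega))$. Symmetrically, in \eqref{DNint2} I would choose $\tilde g := u$, admissible because $w \in L^2(0,T; \tilde{H}^s(\Omega))$ by Proposition 3.3. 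Unwinding the notation $\langle\cdot,\cdot\rangle_\Omega$ via $\tilde h - h = w^*$ and $\tilde g - g = w$, the two identities become
$$\int^T_0 \langle \Lambda_{A,q}g, h\rangle\,dt = \int^T_0 B_t[u, u^*]\,dt + \int^T_0 \langle \partial^\alpha_t w, w^*\rangle\,dt$$
and
$$\int^T_0 \langle \Lambda^*_{A,q}h, g\rangle\,dt = \int^T_0 B_t[u^*, u]\,dt + \int^T_0 \langle D^\alpha_{t,T}w^*, w\rangle\,dt.$$

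The bilinear parts match immediately: $\mathcal{R}^s_{A(t)}$ is symmetric by \eqref{RsAsym} and the electric term $\int_\Omega q(t)uv$ is symmetric in $(u,v)$, so $B_t[u, u^*] = B_t[u^*, u]$ for each $t$ and the two $B_t$-integrals coincide. Thus it remains only to establish the scalar identity
$$\int^T_0 \langle \partial^\alpha_t w, w^*\rangle\,dt = \int^T_0 \langle D^\alpha_{t,T}w^*, w\rangle\,dt,$$
which I would derive from the Banach space-valued version of \eqref{fracibp} applied with $f = w$ and $g = w^*$ (the pointwise product replaced by the spatial pairing). This produces exactly the two integrals above together with the endpoint contribution $\bigl[\langle w, I^{1-\alpha}_{t,T}w^*\rangle\bigr]^{t=T}_{t=0}$.

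The crux is to show that this boundary term vanishes. At $t=0$ the factor $w(0)$ equals $0$ in $L^2(\Omega)$, by the initial condition of \eqref{Caivp} and the continuity $w \in C([0,T]; L^2(\Omega))$ from Proposition 3.3. At $t=T$ the decisive point is that the \emph{other} factor vanishes: since $w^* \in C([0,T]; L^2(\Omega))$ is bounded, one estimates
$$\|I^{1-\alpha}_{t,T}w^*(t)\| \leq \frac{C}{\Gamma(1-\alpha)}\int^T_t (\tau-t)^{-\alpha}\,d\tau = \frac{C}{\Gamma(2-\alpha)}(T-t)^{1-\alpha} \longrightarrow 0 \quad (t \to T^-),$$
so $I^{1-\alpha}_{t,T}w^*|_{t=T} = 0$ regardless of the value of $w(T)$. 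Hence both endpoint terms are zero and the claimed identity follows. I expect the main technical care to lie exactly here: justifying the vector-valued integration by parts at the limited regularity supplied by the well-posedness theory, and confirming that the traces $w(0)$ and $\lim_{t\to T^-} I^{1-\alpha}_{t,T}w^*$ exist and vanish in the appropriate sense. It is worth noting that the endpoint factor involves $I^{1-\alpha}$ rather than the $I^{\alpha}$ appearing in the terminal condition of \eqref{dualRLfracspacetime}, so it is the continuity of $w^*$, and not that terminal condition, that makes the $t=T$ term disappear.
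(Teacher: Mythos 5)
Your proof is correct and follows essentially the same route as the paper: take $\tilde h = u^*$ in \eqref{DNint1} and $\tilde g = u$ in \eqref{DNint2}, cancel the bilinear terms by the symmetry of $B_t$, and reduce the remainder to the boundary term of the fractional integration by parts formula \eqref{fracibp}. Your treatment of that boundary term is in fact slightly more careful than the paper's, which writes $I^\alpha_{t,T}$ where \eqref{fracibp} produces $I^{1-\alpha}_{t,T}$; your observation that the $t=T$ contribution vanishes because $w^*\in C([0,T];L^2(\Omega))$ is bounded (rather than by invoking the terminal condition) is the right justification.
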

	\begin{proof}
		Let $\tilde{h}= u^*$ in (\ref{DNint1}) and 
		let $\tilde{g}= u$ in (\ref{DNint2}).
		Since $u(0)= I^\alpha_{t,T}u^*(T)= 0$, we have
		$$\int^T_{0}\langle \Lambda_{A, q}g(t), h(t)\rangle\,dt
		- \int^T_{0}\langle \Lambda^*_{A, q}h(t), g(t)\rangle\,dt$$
		$$= \int^T_{0}\langle\partial^\alpha_t u(t), u^*(t)\rangle_\Omega
		- \langle D^\alpha_{t,T}u^*(t), u(t)\rangle_\Omega\,dt 
		=\langle u(t), I^\alpha_{t,T}u^*(t)\rangle_\Omega|^{t= T}_{t= 0}= 0$$
		based on the symmetry of $B_t$ and the integration by parts formula (\ref{fracibp}).
	\end{proof}
	
	Now we are ready to derive the integral identities for Dirichlet-to-Neumann maps.
	
	For $g_j\in C^\infty_c(W_j\times (0, T))$ ($j= 1, 2$), 
	let $u_1= P_{A_1, q_1}(g_1)$
	and $u^*_2= P^*_{A_2, q_2}(g_2)$,\\
	i.e. $u_1$ is the solution of
	\begin{equation}
		\left\{
		\begin{aligned}
			\partial_t^\alpha u+ \mathcal{R}^s_{A_1(t)} u+ q_1(t)u &= 0\quad \,\,\,\text{in}\,\,\Omega\times (0, T)\\
			u&= g_1 \quad \text{in}\,\,\Omega_e\times (0, T)\\
			u&= 0\quad \,\,\,\text{in}\,\,\Omega\times \{0\}\\
		\end{aligned}
		\right.
	\end{equation}
	and $u^*_2$ is the solution of
	\begin{equation}
		\left\{
		\begin{aligned}
			D^\alpha_{t,T} u+ \mathcal{R}^s_{A_2(t)} u+ q_2(t)u &= 0\quad \,\,\,\text{in}\,\,\Omega\times (0, T)\\
			u&= g_2 \quad \text{in}\,\,\Omega_e\times (0, T)\\
			I^\alpha_{t,T}u&= 0\quad \,\,\,\text{in}\,\,\Omega\times \{T\}.\\
		\end{aligned}
		\right.
	\end{equation}
	Then by (\ref{DNint1}), (\ref{DNint2}), (\ref{fracibp}) and Proposition 3.4 we have
	$$\int^T_{0}\langle \Lambda_{A_1, q_1}g_1(t), g_2(t)\rangle-
	\langle\Lambda_{A_2, q_2}g_1(t), g_2(t)\rangle\,dt$$
	$$= \int^T_{0}\langle \Lambda_{A_1, q_1}g_1(t), g_2(t)\rangle\,dt
	-\int^T_{0}\langle \Lambda^*_{A_2, q_2}g_2(t), g_1(t)\rangle\,dt$$
	$$= \int^T_{0}B^{(1)}_t[u_1(t), u^*_2(t)]
	+ \langle\partial^\alpha_t u_1(t), u^*_2(t)\rangle_\Omega\,dt
	- \int^T_{0}B^{(2)}_t[u^*_2(t), u_1(t)]
	+ \langle D^\alpha_{t,T}u^*_2(t), u_1(t)\rangle_\Omega\,dt$$
	$$=\int^T_{0}B^{(1)}_t[u_1(t), u^*_2(t)]\,dt
	- \int^T_{0}B^{(2)}_t[u_1(t), u^*_2(t)]\,dt$$
	\begin{equation}\label{intdif}
		= \int^T_{0}\int_{\mathbb{R}^n}\int_{\mathbb{R}^n}
		G(x, y, t)u_1(y, t)u^*_2(x, t)
		-\int^T_{0}\int_\Omega(q_2- q_1)u_1u^*_2
	\end{equation}
	where
	$$G(x, y, t):= 2(R_{A_2(t)}(x, y)- R_{A_1(t)}(x, y))K(x, y).$$
	
	\subsubsection{Runge approximation}
	\begin{prop}
		Suppose $\Omega\subset B_r(0)$ for some constant $r> 0$ and $W$ is a nonempty open set in $\mathbb{R}^n$ s.t. $W\cap B_{3r}(0)= \emptyset$. Then 
		$$S:= \{P_{A, q}g|_{\Omega\times (0, T)}: 
		g\in C^\infty_c(W\times (0, T))\},$$
		$$S^*:= \{P^*_{A, q}g|_{\Omega\times (0, T)}: 
		g\in C^\infty_c(W\times (0, T))\}$$
		are dense in $L^2(\Omega\times (0, T))$.
	\end{prop}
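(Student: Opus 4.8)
The plan is to prove the Runge approximation property by the standard duality/Hahn-Banach argument that reduces density of the solution set to a unique continuation statement for the adjoint problem. I will only write out the argument for $S$; the argument for $S^*$ is symmetric (interchanging the roles of the forward problem \eqref{Cafracspacetime} and the dual problem \eqref{dualRLfracspacetime}, and of the Caputo and right Riemann-Liouville derivatives).

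\medskip

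\noindent \textbf{Reduction by duality.} Since $S$ is a linear subspace of $L^2(\Omega \times (0,T))$, it suffices to show that any $f \in L^2(\Omega \times (0,T))$ annihilating $S$ must be zero. So suppose
$$\int_0^T \int_\Omega f(x,t)\, (P_{A,q}g)(x,t)\, dx\, dt = 0 \qquad \text{for all } g \in C^\infty_c(W \times (0,T)).$$
The idea is to introduce the solution $v$ of the \emph{dual} problem with interior source $f$: that is, I would let $v$ solve
$$D^\alpha_{t,T} v + \mathcal{R}^s_{A(t)} v + q(t) v = f \quad \text{in } \Omega \times (0,T), \qquad v = 0 \ \text{in } \Omega_e \times (0,T), \qquad I^\alpha_{t,T} v = 0 \ \text{in } \Omega \times \{T\},$$
whose well-posedness follows from the dual analogues of Propositions 3.1--3.3. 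Writing $u := P_{A,q}g$ and $w := u - g$, I would pair $f$ against $u$ and integrate: using the weak formulation of the equation solved by $v$, the symmetry \eqref{RsAsym} of $\mathcal{R}^s_{A(t)}$, and the fractional integration-by-parts formula \eqref{fracibp} to move $D^\alpha_{t,T}$ onto the Caputo derivative $\partial^\alpha_t u$, the interior bilinear and time-derivative contributions cancel exactly as in the computation leading to \eqref{DNint1}--\eqref{dualsym}. The boundary terms from \eqref{fracibp} vanish because $u(0) = 0$ and $I^\alpha_{t,T} v(T) = 0$.

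\medskip

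\noindent \textbf{Extracting the exterior trace.} After this cancellation the pairing $\int_0^T\int_\Omega f\, u$ collapses to an exterior expression involving only $g$ and the ``Neumann'' trace of $v$, namely a term of the form $\int_0^T \langle \mathcal{R}^s_{A(t)} v(t), g(t)\rangle\, dt$ supported where $g$ lives. Since $g$ ranges over all of $C^\infty_c(W \times (0,T))$ and $v$ is supported (in space) in $\bar\Omega$ while $W \cap B_{3r}(0) = \emptyset$, Proposition 2.2(i) lets me identify $\mathcal{R}^s_{A(t)} v = (-\Delta)^s v$ on $W$. The vanishing of the annihilation pairing for every such $g$ therefore forces
$$(-\Delta)^s v(\cdot, t) = 0 \quad \text{in } W, \qquad \text{for a.e. } t \in (0,T),$$
and since $v = 0$ in $\Omega_e \supset W$ as well, both $v$ and $(-\Delta)^s v$ vanish on the open set $W$ at (a.e.) each fixed time.

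\medskip

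\noindent \textbf{Unique continuation and conclusion.} At this point I apply the unique continuation property of the fractional Laplacian (Proposition 2.3) slicewise in $t$: for a.e.\ $t$, $v(\cdot,t) \in H^s(\mathbb{R}^n)$ with $v = (-\Delta)^s v = 0$ on $W$ gives $v(\cdot, t) \equiv 0$ on $\mathbb{R}^n$. Hence $v \equiv 0$, and feeding this back into the dual equation forces $f = D^\alpha_{t,T} v + \mathcal{R}^s_{A(t)} v + q(t) v = 0$ in $\Omega \times (0,T)$, completing the duality argument. The step I expect to be the main obstacle is the rigorous justification of the integration-by-parts cancellation in the middle paragraph: the identity \eqref{fracibp} must be applied in the Banach-space-valued, distributional-pairing sense (as the paper notes after \eqref{Hconvex}), and one must check that $v$ has exactly enough regularity — $v \in L^2(0,T; \tilde H^s(\Omega))$ with $I^\alpha_{t,T} v \in H^1(0,T; H^{-s}(\Omega)) \cap C([0,T]; L^2(\Omega))$ from the well-posedness propositions — for the boundary terms $(u\, I^\alpha_{t,T} v)|_0^T$ to be well-defined and to vanish. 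A secondary technical point is the measurability and $L^2$-in-$t$ control needed to pass from the slicewise unique continuation to the global conclusion $v \equiv 0$.
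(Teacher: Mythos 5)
Your proposal is correct and follows essentially the same route as the paper: Hahn--Banach duality, the dual Riemann--Liouville problem with interior source $f$, the fractional integration-by-parts cancellation yielding $\int_0^T\langle\mathcal{R}^s_{A(t)}v(t),g(t)\rangle\,dt=0$ for all $g\in C^\infty_c(W\times(0,T))$, and then unique continuation. The only cosmetic difference is that you unpack the final step into the identification $\mathcal{R}^s_{A(t)}v=(-\Delta)^s v$ on $W$ (valid since $v$ is supported in $\bar\Omega$ and $W\cap B_{3r}(0)=\emptyset$) followed by Proposition 2.3, whereas the paper cites Proposition 2.2(ii) directly, which packages exactly that argument.
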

	\begin{proof}
		By the Hahn-Banach Theorem, it suffices to show that:
		
		If $f\in L^2(\Omega\times (0, T))$ and $\int^T_{0}\int_\Omega wf= 0$ for all $w\in S$, then $f= 0$ in $\Omega\times (0, T)$.
		
		In fact, for an arbitrary given $f\in L^2(\Omega\times (0, T))$, by (the dual version of) Proposition 3.1 we know that the solution of
		\begin{equation}
			\left\{
			\begin{aligned}
				D^\alpha_{t,T} v+ \mathcal{R}^s_{A(t)} v+ q(t)v &= f\quad \text{in}\,\,\Omega\times (0, T)\\
				I^\alpha_{t,T}v&= 0\quad \text{in}\,\,\Omega\times \{T\}.\\
			\end{aligned}
			\right.
		\end{equation}
		satisfies
		$$v\in L^2(0, T; \tilde{H}^s(\Omega)),\qquad
		I^\alpha_{t,T}v\in H^1(0, T; H^{-s}(\Omega))\cap C([0,T]; L^2(\Omega)).$$
		
		For $g\in C^\infty_c(W\times (0, T))$, 
		write $u_g:= P_{A, q}g$, then we have
		$$\int^T_{0}\int_\Omega  u_g f= \int^T_{0}\langle D^\alpha_{t,T}v(t)+ 
		\mathcal{R}^s_{A(t)} v(t)+ q(t)v(t), u_g(t)- g(t)\rangle\,dt$$
		$$= \int_{0}^T\langle\partial^\alpha_t u_g(t), v(t)\rangle
		+ B_t[u_g(t), v(t)]\,dt
		- \int_{0}^T\langle\mathcal{R}^s_{A(t)}g(t), v(t)\rangle\,dt$$
		\begin{equation}\label{RAPid}
			= -\int_{0}^T\langle\mathcal{R}^s_{A(t)}v(t), g(t)\rangle\,dt.
		\end{equation}
		The first equality holds since $u_g- g\in L^2(0, T; \tilde{H}^s(\Omega))$;
		The second equality holds since $u_g(0)= I^\alpha_{t,T}v(T)= 0$ ensures
		$$\int_{0}^T\langle D^\alpha_{t,T}v(t), u_g(t)\rangle=
		\int_{0}^T\langle\partial^\alpha_t u_g(t), v(t)\rangle;$$
		The last equality holds since $v\in L^2(0, T; \tilde{H}^s(\Omega))$ and $u_g$ is the solution of (\ref{Cafracspacetime}).
		
		Hence, if $\int^T_{0}\int_\Omega wf= 0$ for all $w\in S$, then 
		(\ref{RAPid}) yields
		$$\int_{0}^T\langle\mathcal{R}^s_{A(t)}v(t), g(t)\rangle\,dt= 0,\qquad
		g\in C^\infty_c(W\times (0, T))$$
		so for each $t$ we have
		$$v(t)\in \tilde{H}^s(\Omega),
		\qquad \mathcal{R}^s_{A(t)}v(t)|_W= 0,$$
		which implies $v(t)= 0$ in $\mathbb{R}^n$ for each $t$ by Proposition 2.2 (ii) and thus $f= 0$ in $\Omega\times (0, T)$. 
		
		Similarly we can show $S^*$ is dense in $L^2(\Omega\times (0, T))$.
	\end{proof}
	We remark that a similar argument has been used to prove the approximate controllability result for $\partial^\alpha_t+ (-\Delta)^s$ (see Theorem 2.6 in \cite{Warma2019approx}).
	
	\subsubsection{Proof of Theorem 1.1}
	We note that time-fractional derivatives do not appear in the integral identity
	(\ref{intdif}) so we can use the same argument as the one in \cite{li2021fractional} to prove Theorem 1.1 based on the Runge approximation property (Proposition 3.5). To avoid redundancy, we will skip inessential steps in the proof. We refer readers to Subsection 4.3 in \cite{li2021fractional} for full details.
	
	\begin{proof}
		We write  $u_1= P_{A_1, q_1}(g_1)$ and $u^*_2= P^*_{A_2, q_2}(g_2)$ 
		for $g_j\in C^\infty_c(W_j\times (0, T))$.
		
		Based on (\ref{intdif}) and the assumptions stated in Theorem 1.1, we have
		\begin{equation}\label{Gid}
			\int^T_{0}\int_\Omega\int_\Omega G(x, y, t)u_1(y, t)u^*_2(x, t)
			=\int^T_{0}\int_\Omega(q_2- q_1)u_1u^*_2.
		\end{equation}
		
		\textbf{Determine $A$}: We fix open sets $\Omega_j\subset \Omega$ s.t. 
		$\Omega_1\cap \Omega_2= \emptyset$. 
		We fix $\phi_j\in C^\infty_c(\Omega_j)$ and the constants
		$a, b\in (0, T)$. We define
		$$\tilde{\phi}_j(x, t):= 1_{[a, b]}(t)\phi_j(x).$$
		
		By Proposition 3.5, we can choose
		$g_1\in C^\infty_c(W_1\times (0, T))$ (resp. $g_2\in C^\infty_c(W_2\times (0, T))$) s.t. $u_1$ (resp. $u^*_2$) approximates
		$\tilde{\phi}_1$ (resp. $\tilde{\phi}_2$) arbitrarily in $L^2(\Omega\times (0, T))$.
		
		The key observation is that $\phi_1\phi_2= 0$ while $\phi_1\otimes\phi_2\neq 0$ in general. This enables us to take the limit on both sides of (\ref{Gid}) to get
		$$\int^b_a\int_{\Omega_1}\int_{\Omega_2} G(x, y, t)
		\phi_1(y)\phi_2(x)\,dxdydt= 0.$$
		Since the choices of $\phi_1, \phi_2$ and $[a, b]$ are arbitrary, we can conclude that
		$G(x, y, t)= 0$ for $x, y \in\Omega$ whenever $x\neq y$. Thus we know that
		\begin{equation}\label{RA12}
			R_{A_1(t)}(x, y)= R_{A_2(t)}(x, y),\quad x, y\in\Omega
		\end{equation}
		for each $t$. Then we can show that $A_1(t)= \pm A_2(t)$.
		
		\textbf{Determine $q$}: Now (\ref{Gid}) becomes
		$$\int^T_{0}\int_\Omega(q_2- q_1)u_1u^*_2= 0.$$
		For $f\in L^2(\Omega\times (0, T))$, 
		again by Proposition 3.5, we can choose
		$g_1\in C^\infty_c(W_1\times (0, T))$ (resp. $g_2\in C^\infty_c(W_2\times (0, T))$) s.t. $u_1$ (resp. $u^*_2$) approximates
		$f$ (resp. constant function $1$) arbitrarily in $L^2(\Omega\times (0, T))$.
		Then we take the limit to get
		$$\int^T_{0}\int_\Omega(q_1-q_2)f= 0.$$
		We conclude that $q_1= q_2$ since the choice of $f$ is arbitrary.
	\end{proof}
	
	We remark that if we are only interested in the problem  
	\begin{equation}\label{Laspacetime}
		\left\{
		\begin{aligned}
			\partial^\alpha_t u+ (-\Delta)^s u + q(t)u&= 0
			\quad \,\,\,\Omega\times (0, T)\\
			u&= g\quad \,\,\,\Omega_e\times (0, T)\\
			u&= 0\quad \,\,\,\Omega\times \{0\}
		\end{aligned}
		\right.
	\end{equation}
	instead of the more general (\ref{Cafracspacetime}), then the inverse problem will be reduced to the determination of $q$ from partial knowledge of $\Lambda_q$ (i.e. $\Lambda_{0, q}$), and in this case the assumptions on $W_j$ in the statement of Theorem 1.1 are not necessary for the unique determination. 
	
	In fact, based on the unique continuation property of fractional Laplacian (Proposition 2.3) instead of Proposition 2.2, we can use the same argument as the one in the proof of Proposition 3.5 to prove the following Runge approximation property.
	
	\begin{prop}
		Let $W\subset \Omega_e$ be nonempty and open. Then
		$$S:= \{P_q g|_{\Omega\times (0, T)}: 
		g\in C^\infty_c(W\times (0, T))\},\qquad
		S^*:= \{P^*_q g|_{\Omega\times (0, T)}: 
		g\in C^\infty_c(W\times (0, T))\}$$
		are dense in $L^2(\Omega\times (0, T))$. (Here $P_q:= P_{0, q}$ and
		$P^*_q:= P^*_{0, q}$.)
	\end{prop}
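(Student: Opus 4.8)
The plan is to reuse the proof of Proposition 3.5 essentially verbatim, the single decisive modification being that with $A\equiv 0$ the operator in the equation equals $(-\Delta)^s$ on all of $\mathbb{R}^n$, so at the unique continuation step I may invoke Proposition 2.3 (the unconditional UCP of the fractional Laplacian) in place of Proposition 2.2 (ii). This is exactly what dispenses with the ``far away'' requirement $W\cap B_{3r}(0)=\emptyset$ and permits the weaker hypothesis $W\subset\Omega_e$. As in Proposition 3.5, by the Hahn--Banach theorem it suffices to show that if $f\in L^2(\Omega\times(0,T))$ satisfies $\int^T_0\int_\Omega wf=0$ for every $w\in S$, then $f=0$ in $\Omega\times(0,T)$; the density of $S^*$ follows by the same argument applied to the forward-in-time dual problem.

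First I would construct the test function. By the dual version of Proposition 3.1 (taken with $A\equiv 0$), the backward problem
\begin{equation*}
\left\{
\begin{aligned}
D^\alpha_{t,T}v+(-\Delta)^s v+q(t)v&=f\quad\text{in }\Omega\times(0,T)\\
I^\alpha_{t,T}v&=0\quad\text{in }\Omega\times\{T\}
\end{aligned}
\right.
\end{equation*}
admits a solution with $v\in L^2(0,T;\tilde{H}^s(\Omega))$ and $I^\alpha_{t,T}v\in H^1(0,T;H^{-s}(\Omega))\cap C([0,T];L^2(\Omega))$. Next, for $g\in C^\infty_c(W\times(0,T))$ I set $u_g:=P_qg$ and reproduce the computation leading to (\ref{RAPid}): using that $u_g-g\in L^2(0,T;\tilde{H}^s(\Omega))$, the boundary conditions $u_g(0)=I^\alpha_{t,T}v(T)=0$ together with the fractional integration by parts formula (\ref{fracibp}) to trade $D^\alpha_{t,T}$ for $\partial^\alpha_t$, and the symmetry of $(-\Delta)^s$, I obtain
\begin{equation*}
\int^T_0\int_\Omega u_gf=-\int^T_0\langle(-\Delta)^s v(t),g(t)\rangle\,dt.
\end{equation*}
The hypothesis $\int^T_0\int_\Omega wf=0$ for all $w\in S$ then forces $\int^T_0\langle(-\Delta)^s v(t),g(t)\rangle\,dt=0$ for every $g\in C^\infty_c(W\times(0,T))$, and since $g$ ranges over all such test functions this gives $(-\Delta)^s v(t)|_W=0$ for a.e.\ $t$.

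The genuinely new and decisive ingredient comes at the last step. Because $v(t)\in\tilde{H}^s(\Omega)=H^s_{\bar\Omega}(\mathbb{R}^n)$ is supported in $\bar\Omega$ and $W\subset\Omega_e$ is disjoint from $\bar\Omega$, we automatically have $v(t)|_W=0$; combined with $(-\Delta)^s v(t)|_W=0$, Proposition 2.3 yields $v(t)=0$ in $\mathbb{R}^n$ for a.e.\ $t$, whence $f=0$ in $\Omega\times(0,T)$. The only computation that must be checked is the integral identity displayed above, but this is identical to the one already carried out for (\ref{RAPid}) with $\mathcal{R}^s_A$ replaced by $(-\Delta)^s$, so I anticipate no real obstacle there. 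The substantive point—and the reason the hypothesis on $W$ can be relaxed—is simply that, with $A\equiv 0$, there is no magnetic term obstructing the direct application of the fractional-Laplacian unique continuation, so one needs only $W\subset\Omega_e$ rather than the restrictive placement required by Proposition 2.2 (ii).
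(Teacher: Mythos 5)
Your proposal is correct and follows essentially the same route as the paper: the paper itself proves this proposition by repeating the argument of Proposition 3.5 verbatim and substituting the unconditional unique continuation property of $(-\Delta)^s$ (Proposition 2.3) for Proposition 2.2 (ii) at the final step, exactly as you do. Your added observation that $v(t)\in\tilde{H}^s(\Omega)$ forces $v(t)|_W=0$ for $W\subset\Omega_e$ is precisely the point that makes Proposition 2.3 applicable, so nothing is missing.
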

	
	Then we can use the argument in the second half of the proof of Theorem 1.1 to prove the following unique determination theorem. 
	
	\begin{prop}
		Let $W_1, W_2\subset \Omega_e$ be nonempty and open. If
		$$\Lambda_{q_1} g|_{W_2\times (0, T)}= \Lambda_{q_2} g|_{W_2\times (0, T)},
		\qquad g\in C^\infty_c(W_1\times (0, T)),$$
		then $q_1= q_2$ in $\Omega\times (0, T)$.
	\end{prop}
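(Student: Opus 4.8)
The plan is to reproduce the ``determine $q$'' half of the proof of Theorem 1.1, now using the Runge approximation property of Proposition 3.6 (valid for any nonempty open $W\subset\Omega_e$) in place of Proposition 3.5. First I would specialize the integral identity (\ref{intdif}) to the present setting. Taking $A_1=A_2=0$ in the derivation leading to (\ref{intdif}), we have $R_{A_j(t)}(x,y)=\cos(0)=1$, so that
$$G(x,y,t)=2(R_{A_2(t)}(x,y)-R_{A_1(t)}(x,y))K(x,y)=0.$$
Hence, writing $u_1=P_{q_1}g_1$ and $u^*_2=P^*_{q_2}g_2$ for $g_j\in C^\infty_c(W_j\times(0,T))$, the identity (\ref{intdif}) collapses to
$$\int^T_0\langle\Lambda_{q_1}g_1(t),g_2(t)\rangle-\langle\Lambda_{q_2}g_1(t),g_2(t)\rangle\,dt=-\int^T_0\int_\Omega(q_2-q_1)u_1u^*_2.$$
I should note that Proposition 3.4 (the duality between $\Lambda$ and $\Lambda^*$) and the well-posedness Propositions 3.1--3.3 continue to apply here, since the relevant operator is the $A\equiv 0$ special case $\mathcal{R}^s_0=(-\Delta)^s$ of the general theory already developed.

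Next I would invoke the hypothesis. Since $g_1$ is supported in $W_1$ and $g_2$ in $W_2$, the assumed equality $\Lambda_{q_1}g|_{W_2\times(0,T)}=\Lambda_{q_2}g|_{W_2\times(0,T)}$ forces the left-hand side above to vanish, leaving
$$\int^T_0\int_\Omega(q_2-q_1)u_1u^*_2=0,\qquad g_j\in C^\infty_c(W_j\times(0,T)).$$

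Finally I would exploit the Runge approximation. Given an arbitrary $f\in L^2(\Omega\times(0,T))$, Proposition 3.6 lets me pick $g_1$ so that $u_1|_{\Omega\times(0,T)}$ approximates $f$ and pick $g_2$ so that $u^*_2|_{\Omega\times(0,T)}$ approximates the constant function $1$, both in $L^2(\Omega\times(0,T))$. Since $q_2-q_1\in L^\infty$ and the product of two $L^2$-convergent sequences converges in $L^1$, I may pass to the limit to obtain $\int^T_0\int_\Omega(q_2-q_1)f=0$. As $f$ is arbitrary, $q_1=q_2$ in $\Omega\times(0,T)$, as desired.

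The substantive content has already been isolated in Proposition 3.6, so the proof itself is essentially a routine limiting argument requiring no new estimates; the only genuine obstacle is the Runge approximation there, which in turn rests on the unique continuation property of $(-\Delta)^s$ (Proposition 2.3). The point is that, unlike Proposition 2.2(ii) for $\mathcal{R}^s_A$, Proposition 2.3 imposes no geometric restriction on the measurement set, which is precisely why the awkward hypothesis $W_j\cap B_{3r}(0)=\emptyset$ from Theorem 1.1 can be dropped here and replaced by $W_j\subset\Omega_e$.
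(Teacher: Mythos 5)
Your proposal is correct and follows essentially the same route as the paper: the paper explicitly reduces Proposition 3.7 to ``the argument in the second half of the proof of Theorem 1.1'' (the determination of $q$), with Proposition 3.6 replacing Proposition 3.5, which is exactly what you carry out. Your added observations --- that $G\equiv 0$ when $A_1=A_2=0$, that the well-posedness and duality results specialize to $\mathcal{R}^s_0=(-\Delta)^s$, and that the $L^1$ convergence of the product justifies passing to the limit --- are correct fillings-in of steps the paper leaves implicit.
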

	
	\section{Semilinear problem}
	\subsection{Forward problem}
	Now we turn to the semilinear problem (\ref{Semifracspacetime}).
	We first prove the following a priori $L^\infty$ estimate.
	\begin{prop}
		Let $g\in C^\infty_c(\Omega_e\times (0, T))$. 
		Suppose $u\in C([0, T]; L^2(\mathbb{R}^n))$ is a solution of 
		\begin{equation}\label{fgfracpara}
			\left\{
			\begin{aligned}
				\partial^\alpha_t u+ (-\Delta)^s u + a(x, t, u)&= f
				\quad \,\,\,\Omega\times (0, T)\\
				u&= g\quad \,\,\,\Omega_e\times (0, T)\\
				u&= 0\quad \,\,\,\Omega\times \{0\}.
			\end{aligned}
			\right.
		\end{equation}
		Then we have
		$$||u||_{L^\infty}\leq \frac{T^\alpha}{\Gamma(\alpha+1)}||f||_{L^\infty(\Omega\times (0, T))}
		+ ||g||_{L^\infty(\Omega_e\times (0, T))}.$$
	\end{prop}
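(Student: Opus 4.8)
The plan is to prove the two-sided bound $-M\le u\le M$ with $M:=\frac{T^\alpha}{\Gamma(\alpha+1)}\|f\|_{L^\infty(\Omega\times(0,T))}+\|g\|_{L^\infty(\Omega_e\times(0,T))}$ by a comparison/energy-truncation argument. First I would reduce the lower bound to the upper bound by symmetry: since $a(x,t,z)=\sum_k a_k(x,t)|z|^{b_k}z$ is odd in $z$, the function $-u$ solves the same equation with data $-g$ and source $-f$ (indeed $\partial^\alpha_t(-u)+(-\Delta)^s(-u)+a(x,t,-u)=-f$), so applying the upper bound to $-u$ gives $u\ge -M$. Hence it suffices to prove $u\le M$.

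For the upper bound I would compare $u$ with the spatially constant supersolution
$$\bar\psi(t):=\|g\|_{L^\infty(\Omega_e\times(0,T))}+\|f\|_{L^\infty(\Omega\times(0,T))}\frac{t^\alpha}{\Gamma(\alpha+1)},$$
which satisfies $\partial^\alpha_t\bar\psi=\|f\|_{L^\infty}$, $\bar\psi(0)=\|g\|_{L^\infty(\Omega_e)}$, and $\bar\psi(t)\le\bar\psi(T)=M$. The natural test function is $w(t):=(u(t)-\bar\psi(t))_+$. Since $u=g\le\|g\|_{L^\infty(\Omega_e)}\le\bar\psi(t)$ on $\Omega_e$, the truncation $w(t)$ vanishes outside $\bar\Omega$ and therefore lies in $\tilde H^s(\Omega)$; moreover $w(0)=0$ in $\Omega$ because $u(0)=0\le\bar\psi(0)$.

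Then I would test the weak formulation of (\ref{fgfracpara}) against $w$ and estimate the three resulting terms. Setting $v:=u-\bar\psi$ and $Y(t):=\tfrac12\|w(t)\|_{L^2(\Omega)}^2=\int_\Omega H(v)\,dx$ with the convex function $H(z)=\tfrac12(z_+)^2$, the convexity inequality (\ref{Hconvex}) in its Banach-valued form, combined with $\partial^\alpha_t\bar\psi=\|f\|_{L^\infty}$, gives $\langle\partial^\alpha_t u,w\rangle_\Omega\ge \partial^\alpha_t Y+\|f\|_{L^\infty}\int_\Omega w$. For the nonlocal term, because $\bar\psi$ is constant in $x$ and $z\mapsto z_+$ is nondecreasing, the symmetric form has nonnegative integrand $(u(x)-u(y))(w(x)-w(y))=(v(x)-v(y))(v_+(x)-v_+(y))\ge 0$, so $\langle(-\Delta)^s u,w\rangle\ge 0$. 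On the set $\{w>0\}$ one has $u>\bar\psi\ge 0$, hence $a(x,t,u)\ge 0$ and $\int_\Omega a(x,t,u)\,w\ge 0$. Finally $\int_\Omega f w\le\|f\|_{L^\infty}\int_\Omega w$. Feeding these bounds into the identity (the three left-hand terms sum to $\int_\Omega fw$), the two copies of $\|f\|_{L^\infty}\int_\Omega w$ cancel and I obtain $\partial^\alpha_t Y(t)\le 0$.

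To conclude I would apply the fractional integral $I^\alpha_{0,t}$ to $\partial^\alpha_t Y\le 0$ and use the fractional fundamental theorem $I^\alpha_{0,t}\partial^\alpha_t Y=Y-Y(0)=Y$ (as $Y(0)=0$) together with the positivity of the kernel $\phi_\alpha\ge 0$ to deduce $Y\le 0$; since $Y\ge 0$ this forces $Y\equiv 0$, i.e. $u\le\bar\psi\le M$ in $\Omega$, while $u=g\le M$ on $\Omega_e$. I expect the main obstacle to be making this formal computation rigorous at the level of a weak solution: one must verify that $w=(u-\bar\psi)_+$ is an admissible time-dependent test function with sufficient regularity (which likely requires the natural energy regularity $u-g\in L^2(0,T;\tilde H^s(\Omega))$ supplied by the forward theory, or a Galerkin/mollification approximation followed by passage to the limit), and that $\partial^\alpha_t$ may be interchanged with $\int_\Omega$ and combined with (\ref{Hconvex}) in the integrated sense. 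Once these technical points are secured, the sign structure of $a$ and the monotonicity of both fractional operators make the estimate close.
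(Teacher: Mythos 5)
Your proposal is correct and follows essentially the same route as the paper: comparison with the barrier $\|g\|_{L^\infty}+\|f\|_{L^\infty}\,t^\alpha/\Gamma(\alpha+1)$, truncation by the positive part, the convexity inequality (\ref{Hconvex}) with $H(y)=\tfrac12(y^+)^2$, the sign of the nonlocal pairing $\langle(-\Delta)^s\tilde u,\tilde u^+\rangle\ge 0$ and of the nonlinearity, and inversion of $\partial^\alpha_t$ by convolving with $\phi_\alpha$. The only cosmetic difference is that you use a spatially constant barrier and test the original equation (so the term $(-\Delta)^s$ of the barrier never appears), whereas the paper multiplies the barrier by a cutoff $\varphi$ with $(-\Delta)^s\varphi\ge 0$ in $\Omega$ and works with the equation for the difference; both variants close in the same way.
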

	
	\begin{proof}
		We fix $\varphi\in C^\infty_c(\mathbb{R}^n)$ s.t. $0\leq \varphi\leq 1$
		and $\varphi= 1$ on $\bar\Omega \cup \mathrm{supp}_x g$. We define
		$$\tilde{\varphi}(x, t):= (||f||_{L^\infty(\Omega\times (0,T))}
		\frac{t^\alpha}{\Gamma(\alpha+1)}
		+ ||g||_{L^\infty(\Omega_e\times (0, T))})\varphi(x).$$
		Clearly $(-\Delta)^s\varphi\geq 0$ in $\Omega$ from the pointwise definition 
		(\ref{fracLa}) of $(-\Delta)^s$. Also note that
		$$\partial^\alpha_t(t^\alpha)= \Gamma(\alpha+1),\qquad
		\partial^\alpha_t(c)= 0$$
		so we have
		$$\partial^\alpha_t\tilde{\varphi}+(-\Delta)^s\tilde{\varphi}+ a(x, t, \tilde{\varphi})\geq ||f||_{L^\infty(\Omega\times (0, T))}$$
		in $\Omega\times (0, T)$. Now we consider $\tilde{u}:= u- \tilde{\varphi}$.
		Note that $\tilde{u}\leq 0$ in 
		$\Omega_e\times (0, T)$, $\tilde{u}\leq 0$ at $t= 0$ and 
		\begin{equation}\label{tildeest}
			\partial^\alpha_t\tilde{u}+(-\Delta)^s\tilde{u}
			+ a(x, t, u)- a(x, t, \tilde{\varphi})\leq 0
		\end{equation}
		in $\Omega\times (0, T)$. We write $\tilde{u}= \tilde{u}^+ - \tilde{u}^-$
		where $\tilde{u}^{\pm}= \max\{\pm \tilde{u}, 0\}$. Then $\tilde{u}^+= 0$ in $\Omega_e\times (0, T)$ and $\tilde{u}^+= 0$ at $t=0$. 
		
		If we choose $H(y)= \frac{1}{2}(y^+)^2$ in (\ref{Hconvex}), then we have
		\begin{equation}\label{tildeest1}
			\frac{1}{2}\partial^\alpha_t(||\tilde{u}^+(t)||^2_{L^2(\Omega)})
			\leq \langle \partial^\alpha_t\tilde{u}(t), \tilde{u}^+(t)\rangle.
		\end{equation}
		Note that by the definition of the Caputo derivative, we have
		$$\frac{1}{2}\partial^\alpha_t(||\tilde{u}^+(t)||^2_{L^2(\Omega)})
		= \frac{1}{2}\partial_t(\phi_{1-\alpha} *(||\tilde{u}^+(t)||^2_{L^2(\Omega)}- ||\tilde{u}^+(0)||^2_{L^2(\Omega)}))
		= \frac{1}{2}\partial_t(\phi_{1-\alpha} *(||\tilde{u}^+( t)||^2_{L^2(\Omega)})$$
		since $\tilde{u}^+= 0$ at $t=0$. Also note that
		$u- g\in C([0, T]; L^2(\Omega))$ implies
		$$\phi_{1-\alpha}* ||\tilde{u}^+(t)||^2_{L^2(\Omega)}= 0$$
		at $t= 0$, so by (\ref{alpha1}) we have
		$$\phi_{\alpha}* \partial_t(\phi_{1-\alpha}* ||\tilde{u}^+(t)||^2_{L^2(\Omega)})= \partial_t(\phi_{\alpha}* \phi_{1-\alpha}
		*||\tilde{u}^+(t)||^2_{L^2(\Omega)})$$
		\begin{equation}\label{tildeid1}
			= \partial_t(1* ||\tilde{u}^+(t)||^2_{L^2(\Omega)})= ||\tilde{u}^+
			(t)||^2_{L^2(\Omega)}.
		\end{equation}

		Now we let both sides of (\ref{tildeest}) act on $\tilde{u}^+$. Based on (\ref{tildeest1}) we can apply the convolution operation $\phi_{\alpha}\,*$ and
		(\ref{tildeid1}) to get
		$$\frac{1}{2}||\tilde{u}^+(t)||^2_{L^2(\Omega)}+ \phi_{\alpha}* \langle (-\Delta)^s\tilde{u}(t), \tilde{u}^+(t)\rangle$$
		$$+ \phi_{\alpha}*\langle a(\cdot, t, u(\cdot, t))- a(\cdot, t, \tilde{\varphi}(\cdot, t)), \tilde{u}^+(\cdot, t)\rangle\leq 0$$
		since $\phi_{\alpha}$ is a positive function.
		Note that $a(x, t, u)- a(x, t, \tilde{\varphi})$ has the same sign as $\tilde{u}$ and 
		\begin{equation}\label{PlusfracLa}
			\langle (-\Delta)^s\tilde{u}(t), \tilde{u}^+(t)\rangle\geq 0
		\end{equation}
		Hence we have $||\tilde{u}^+(t)||^2_{L^2(\Omega)}\leq 0$ and thus the only possibility is $\tilde{u}^+= 0$, i.e. $u\leq \tilde{\varphi}$ in $\Omega\times (0, T)$. 
		
		Also note that $a(x, t, \tilde{\varphi})+ a(x, t, u)$ has the same sign as $\tilde{\varphi}+ u$ so similarly we can consider $\tilde{u}:= -u-\tilde{\varphi}$ and show $-\tilde{\varphi}\leq u$ in $\Omega\times (0, T)$. Hence we have
		$|u|\leq \tilde{\varphi}$ in $\Omega\times (0, T)$.
	\end{proof}
	
	We remark that we have
	$$\langle (-\Delta)^s\tilde{u}(t), \tilde{u}^+(t)\rangle=
	\int_{\mathbb{R}^n}\int_{\mathbb{R}^n}\frac{(\tilde{u}(x,t)- \tilde{u}(y,t))(\tilde{u}^+(x,t)- \tilde{u}^+(y,t))}{|x-y|^{n+2s}}dxdy.$$
	Since $(\tilde{u}^+(x,t)- \tilde{u}^+(y,t))(\tilde{u}^-(x,t)- \tilde{u}^-(y,t))\leq 0$, we have 
	$$\langle (-\Delta)^s\tilde{u}(t), \tilde{u}^+(t)\rangle
	\geq \int_{\mathbb{R}^n}\int_{\mathbb{R}^n}\frac{(\tilde{u}^+(x,t)- \tilde{u}^+(y,t))^2}{|x-y|^{n+2s}}dxdy\geq 0.$$
	Hence (\ref{PlusfracLa}) holds true.
	
	To study the well-posedness of (\ref{Semifracspacetime}),
	we again use the substitutions $u= w+g$ and 
	$f= -(-\Delta)^s g|_{\Omega\times (0, T)}$ so (\ref{Semifracspacetime})
	can be converted into the initial value problem
	\begin{equation}\label{Capsemiivp}
		\left\{
		\begin{aligned}
			\partial^\alpha_t w+ (-\Delta)^s w + a(x, t, w)&= f
			\quad \,\,\,\Omega\times (0, T)\\
			w&= 0\quad \,\,\,\Omega\times \{0\}.
		\end{aligned}
		\right.
	\end{equation}
	Our assumptions on $g$ and $a$ ensure that $f-a$ satisfies the conditions 
	(F1), (F2) (and thus (F3), (F4)) on Page 66 and the condition (F5) on Page 81 in \cite{gal2020fractional}. Hence we can apply Theorem 3.2.2, Corollary 3.2.3 and (3.2.32) in \cite{gal2020fractional}
	to obtain the following well-posedness result.
	\begin{prop}
		There exists $0< T_{max}\leq \infty$ s.t. for $0< T < T_{max}$, (\ref{Semifracspacetime}) has a unique solution
		$$w\in C([0, T]; L^\infty(\Omega)).$$ Furthermore, We have either (i) $T_{max}= \infty$ or (ii) $0< T< \infty$ and $lim_{t\to T_{max}}||u(\cdot, t)||_{L^\infty}= \infty$. 
	\end{prop}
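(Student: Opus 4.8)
The plan is to realize (\ref{Capsemiivp}) as an abstract semilinear fractional Cauchy problem of the form (\ref{CdeL}) in the Banach space $X = L^\infty(\Omega)$ and then to invoke the abstract theory of \cite{gal2020fractional}. First I would use the substitution $u = w + g$ together with $f = -(-\Delta)^s g|_{\Omega\times(0,T)}$, which is smooth on $\bar\Omega\times[0,T]$ because $g$ is supported in $\Omega_e$, to pass from (\ref{Semifracspacetime}) to (\ref{Capsemiivp}). Writing the nonlinear source as $N(t,w) := f(t) - a(\cdot,t,w)$ and taking $L$ to be the sectorial operator introduced at the end of Subsection 3.1, the variation-of-constants formula (\ref{SPformula}) recasts (\ref{Capsemiivp}) as the integral equation
$$w(t) = \int_0^t P_\alpha(t-\tau)\, N(\tau, w(\tau))\, d\tau,$$
whose continuous solutions are precisely the mild solutions I wish to produce.

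The heart of the argument is to check that $N$ meets the structural hypotheses (F1), (F2) and (F5) of \cite{gal2020fractional}; the remaining conditions (F3), (F4) then follow from these. Since each scalar map $z \mapsto |z|^{b_k} z$ lies in $C^1(\mathbb{R})$ with derivative $(b_k+1)|z|^{b_k}$, it is Lipschitz on every bounded interval, and the coefficients $a_k$ are smooth, hence bounded, on $\bar\Omega\times[0,T]$. Working in $X = L^\infty(\Omega)$ is what keeps this clean: the superposition operator $w \mapsto |w|^{b_k} w$ maps bounded subsets of $L^\infty(\Omega)$ into bounded subsets and is Lipschitz on them, with Lipschitz constant controlled by $\sup_k (b_k+1)\|w\|_{L^\infty}^{b_k}$. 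Consequently $N(t,\cdot)$ is locally Lipschitz on $X$, uniformly for $t$ in compact subintervals, and continuous in $t$ through $f$ and the $a_k$; this is exactly the content of (F1)--(F2), while (F5), a growth/continuation requirement, is verified from the same polynomial-type structure of $a$.

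With (F1)--(F5) available, I would apply Theorem 3.2.2 of \cite{gal2020fractional} to obtain a unique mild solution $w \in C([0,T]; L^\infty(\Omega))$ on a short interval, then Corollary 3.2.3 to extend it to a unique solution on a maximal interval $[0, T_{max})$ with $0 < T_{max} \le \infty$, and finally the blow-up alternative (3.2.32) to get the stated dichotomy: either $T_{max} = \infty$, or $T_{max} < \infty$ and $\|u(\cdot,t)\|_{L^\infty} \to \infty$ as $t \to T_{max}$. Translating back through $u = w + g$ yields the claim for (\ref{Semifracspacetime}).

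I expect the main obstacle to be not the abstract fixed-point machinery but the verification that $L = -(-\Delta)^s|_\Omega$ is sectorial on the precise space $L^\infty(\Omega)$ and that the resolvent family $\{P_\alpha(t)\}$ has the smoothing and integrability needed to close the contraction argument in $C([0,T]; L^\infty(\Omega))$; analytic-semigroup generation for the fractional Laplacian is more delicate on $L^\infty$ than on $L^2$, so one may have to work in a subspace such as $C_0(\Omega)$, or in a fractional power scale $X^\beta$ embedding into $L^\infty$, and interpret $L$ as the part of its $L^2$-realization there. A secondary but conceptually central point is that the non-integer exponents $b_k$ preclude any global Lipschitz bound, so only local Lipschitz continuity is at hand; this is exactly why the conclusion is local existence together with a blow-up alternative rather than global existence. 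I note that combining the dichotomy with the a priori bound of Proposition 4.1, which controls $\|u\|_{L^\infty}$ on $[0,T]$ independently of proximity to $T_{max}$, would rule out alternative (ii) and force $T_{max} = \infty$, though that refinement goes beyond what the present statement asserts.
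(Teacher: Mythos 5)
Your proposal is correct and follows essentially the same route as the paper: the substitution $u = w+g$ with $f = -(-\Delta)^s g|_{\Omega\times(0,T)}$ to reduce to (\ref{Capsemiivp}), verification of the structural conditions (F1), (F2) (hence (F3), (F4)) and (F5) of \cite{gal2020fractional} using the local Lipschitz character of $z\mapsto |z|^{b_k}z$, and then an appeal to Theorem 3.2.2, Corollary 3.2.3 and the blow-up alternative (3.2.32) there. Your additional remarks on the delicacy of sectoriality in $L^\infty(\Omega)$ and on combining the dichotomy with Proposition 4.1 to force $T_{max}=\infty$ are accurate and match what the paper does immediately after the proposition.
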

	We remark that the solution $w$ actually belongs to certain H\"older type function spaces. We refer readers to Subsection 3.2 in \cite{gal2020fractional} for more details.
	
	By the previous $L^\infty$
	estimate, we have $T_{max}= \infty$ for $g\in C^\infty_c(\Omega_e\times (0, T))$ under our assumptions on the nonlinearity $a$. Hence we do not need any smallness assumptions on the constant $T$.
	
	Now we prove a linearization result.
	
	For $g\in C^\infty_c(\Omega_e\times (0, T))$, we use 
	$u_g$ to denote the solution of the linear problem 
	\begin{equation}\label{linearfracspacetime}
		\left\{
		\begin{aligned}
			\partial^\alpha_t u+ (-\Delta)^s u + a_1(x, t)u&= 0
			\quad \,\,\,\Omega\times (0, T)\\
			u&= g\quad \,\,\,\Omega_e\times (0, T)\\
			u&= 0\quad \,\,\,\Omega\times \{0\}
		\end{aligned}
		\right.
	\end{equation}
	and we use $u_{\lambda, g}$ to denote the solution of the semilinear problem
	\begin{equation}\label{lambdafrac}
		\left\{
		\begin{aligned}
			\partial^\alpha_t u+ (-\Delta)^s u + a(x, t, u)&= 0\quad \,\,\,\Omega\times (0, T)\\
			u&= \lambda g\,\,\,\,\, \Omega_e\times (0, T)\\
			u&= 0\quad \,\,\,\Omega\times \{0\}
		\end{aligned}
		\right.
	\end{equation}
	in the rest of this section.
	
	\begin{prop}
		Let $w_{\lambda, g}:= u_g- \frac{u_{\lambda, g}}{\lambda}$. Then $\lim_{\lambda\to 0}w_{\lambda, g}= 0$
		in $C([0,T]; L^\infty(\Omega))$.
	\end{prop}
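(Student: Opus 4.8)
The plan is to rescale the semilinear solution and reduce the claim to the a priori $L^\infty$ estimate of Proposition 4.1. Set $v_\lambda := u_{\lambda, g}/\lambda$, so that $w_{\lambda, g} = u_g - v_\lambda$. Using the exact power-type homogeneity of the nonlinearity (\ref{fracpower}), namely $a(x, t, \lambda z) = \sum_{k=1}^m a_k(x,t)\, |\lambda|^{b_k} |z|^{b_k} \lambda z$, and dividing the equation (\ref{lambdafrac}) for $u_{\lambda, g}$ by $\lambda$, I find that $v_\lambda$ solves
\begin{equation*}
\partial^\alpha_t v_\lambda + (-\Delta)^s v_\lambda + a_1(x,t)\, v_\lambda + \sum_{k=2}^m a_k(x,t)\, |\lambda|^{b_k} |v_\lambda|^{b_k} v_\lambda = 0
\end{equation*}
in $\Omega \times (0,T)$, with $v_\lambda = g$ in $\Omega_e \times (0,T)$ and $v_\lambda = 0$ at $t = 0$; here the $k=1$ term reproduces exactly $a_1 v_\lambda$ because $b_1 = 0$. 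Subtracting this from the linear equation (\ref{linearfracspacetime}) for $u_g$ shows that $w_{\lambda, g}$ is the solution of the linear problem
\begin{equation*}
\partial^\alpha_t w_{\lambda,g} + (-\Delta)^s w_{\lambda,g} + a_1(x,t)\, w_{\lambda,g} = F_\lambda \quad \text{in}\,\, \Omega \times (0,T),
\end{equation*}
with zero exterior data and zero initial data, where $F_\lambda := \sum_{k=2}^m a_k(x,t)\, |\lambda|^{b_k} |v_\lambda|^{b_k} v_\lambda$.

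Next I would bound the source $F_\lambda$ uniformly. Applying Proposition 4.1 to $u_{\lambda, g}$ (which solves (\ref{lambdafrac}), i.e.\ (\ref{fgfracpara}) with vanishing internal source and exterior data $\lambda g$) gives $\|u_{\lambda, g}\|_{L^\infty} \leq |\lambda|\,\|g\|_{L^\infty(\Omega_e \times (0,T))}$, hence the uniform bound $\|v_\lambda\|_{L^\infty} \leq \|g\|_{L^\infty(\Omega_e \times (0,T))}$. Consequently, since each $b_k > 0$ for $k \geq 2$ and the $a_k$ are bounded,
\begin{equation*}
\|F_\lambda\|_{L^\infty(\Omega \times (0,T))} \leq \sum_{k=2}^m \|a_k\|_{L^\infty} |\lambda|^{b_k} \|g\|_{L^\infty}^{b_k + 1} \longrightarrow 0 \quad \text{as}\,\, \lambda \to 0.
\end{equation*}

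Finally, since $a_1 \geq 0$, the linear equation for $w_{\lambda, g}$ is itself of the admissible form (taking $m = 1$ in (\ref{fracpower})), so Proposition 4.1 applies to it with internal source $F_\lambda$ and zero exterior and initial data, yielding
\begin{equation*}
\|w_{\lambda, g}\|_{C([0,T]; L^\infty(\Omega))} \leq \frac{T^\alpha}{\Gamma(\alpha+1)} \|F_\lambda\|_{L^\infty(\Omega \times (0,T))} \longrightarrow 0,
\end{equation*}
which is exactly the asserted convergence. The main point to check carefully is the rescaling step: that $v_\lambda = u_{\lambda, g}/\lambda$ genuinely solves the displayed equation, which rests on the precise homogeneity of $a$ and on the uniqueness furnished by Proposition 4.2, and that the comparison argument behind Proposition 4.1 does apply to the difference equation for $w_{\lambda, g}$ (i.e.\ the monotonicity of $z \mapsto a_1(x,t) z$ with $a_1 \geq 0$ persists). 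Everything else is a direct application of the a priori estimate together with the smallness of $F_\lambda$.
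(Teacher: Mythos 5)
Your proposal is correct and follows essentially the same route as the paper: both derive the linear equation $\partial^\alpha_t w_{\lambda,g}+(-\Delta)^s w_{\lambda,g}+a_1 w_{\lambda,g}=\frac{1}{\lambda}\sum_{k=2}^m a_k|u_{\lambda,g}|^{b_k}u_{\lambda,g}$ (your $F_\lambda$ is exactly this right-hand side, rewritten via $v_\lambda=u_{\lambda,g}/\lambda$), then apply the a priori estimate of Proposition 4.1 twice — once to $u_{\lambda,g}$ to get $\|u_{\lambda,g}\|_{L^\infty}\leq \lambda\|g\|_{L^\infty}$ and once to $w_{\lambda,g}$ with source $F_\lambda$ and zero exterior/initial data — to conclude $\|w_{\lambda,g}\|_{L^\infty}\leq \frac{T^\alpha}{\Gamma(\alpha+1)}\sum_{k=2}^m\lambda^{b_k}\|a_k\|_{L^\infty}\|g\|_{L^\infty}^{b_k+1}\to 0$. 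The explicit rescaling to $v_\lambda$ is a purely cosmetic difference.
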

	\begin{proof}
		Note that $w_{\lambda, g}= 0$ in $\Omega_e\times (0, T)$ and
		$$\partial_t w_{\lambda, g}+ (-\Delta)^s w_{\lambda, g}
		+ a_1(x, t)w_{\lambda, g}= \frac{1}{\lambda}
		\sum^m_{k=2}a_k(x, t)|u_{\lambda, g}|^{b_k}u_{\lambda, g}$$
		in $\Omega\times (0, T)$. By the $L^\infty$
		estimate (Proposition 4.1), we have
		$$||w_{\lambda, g}||_{L^\infty}\leq \frac{T^\alpha}{\lambda\Gamma(\alpha+ 1)}
		||\sum^m_{k=2}a_k(x, t)|u_{\lambda, g}|^{b_k}u_{\lambda, g}||_{L^\infty(\Omega\times (0, T))}$$
		and $||u_{\lambda, g}||_{L^\infty}\leq 
		\lambda||g||_{L^\infty(\Omega_e\times (0, T))}$ so
		$$||w_{\lambda, g}||_{L^\infty}
		\leq \frac{T^\alpha}{\Gamma(\alpha+ 1)}\sum^m_{k=2}\lambda^{b_k}||a_k(x,t)||_{L^\infty}||g||^{b_k+1}_{L^\infty(\Omega_e\times (0, T))},$$
		which implies $||w_{\lambda, g}||_{L^\infty}\to 0$ as $\lambda\to 0$.
	\end{proof}
	
	\subsection{Inverse problem}
	We will use the same argument as the one in \cite{li2021fractional} to prove Theorem 1.2 based on the unique continuation property of the fractional Laplacian (Proposition 2.3), the Runge approximation property (Proposition 3.6), the $L^\infty$ estimate (Proposition 4.1) and the linearization result (Proposition 4.3). 
	We remark that this kind of regimes work for solving Calder\'on type inverse problems for a broad class of fractional evolutionary operators involving 
	the fractional Laplacian (see Theorem 2.18 in \cite{kowinverse} for more details).
	To avoid redundancy, we will skip inessential steps in the proof. We refer readers to Subsection 4.2 in \cite{li2021inversediff} for full details.
	
	\begin{proof}
		Based on Proposition 2.3, the assumption (\ref{semiIDN})
		implies that 
		$$u^{(1)}_{\lambda, g}= u^{(2)}_{\lambda, g}=:  u_{\lambda, g}$$
		in $\mathbb{R}^n\times (0, T)$. 
		Hence we have
		\begin{equation}\label{aRid}
			(a^{(1)}_1- a^{(2)}_1)u_{\lambda, g}
			= R^{(2)}_1(x, t, u_{\lambda, g})
			- R^{(1)}_1(x, t, u_{\lambda, g})
		\end{equation}
		in $\Omega\times (0, T)$ where
		$$R^{(i)}_j(x, t, z):= \sum^m_{k=j+1}a^{(i)}_k(x, t)|z|^{b_k}z.$$
		
		Now we note that  
		$$||a^{(1)}_1- a^{(2)}_1||_{L^2(\Omega\times (0, T))}$$
		$$\leq ||a^{(1)}_1- a^{(2)}_1||_{L^\infty}||1- \frac{u_{\lambda, g}}{\lambda}||_{L^2(\Omega\times (0, T))}$$
		\begin{equation}\label{2inftyest}
			+ \frac{1}{\lambda}||(a^{(1)}_1- a^{(2)}_1)
			u_{\lambda, g}||_{L^2(\Omega\times (0, T))}.
		\end{equation}
		For given $\delta> 0$, by Proposition 3.6 we can choose 
		$g\in C^\infty_c(W_1\times (0, T))$ s.t. 
		$$||1- u_g||_{L^2(\Omega)\times (0, T)}\leq \delta$$ 
		and for this chosen $g$, we have 
		\begin{equation}\label{Rungelinear}
			||1- \frac{u_{\lambda, g}}{\lambda}||_{L^2(\Omega\times (0, T))}\leq 2\delta
		\end{equation}
		for small $\lambda$ by Proposition 4.3. Since Proposition 4.1 implies that
		$$||u_{\lambda, g}||_{L^\infty}\leq 
		\lambda||g||_{L^\infty(\Omega_e\times (0, T))},$$
		by (\ref{aRid}) we have
		$$\frac{1}{\lambda}||(a^{(1)}_1(\cdot,t)- a^{(2)}_1(\cdot,t))
		u_{\lambda, g}(\cdot, t)||_{L^2(\Omega)}$$
		$$\leq C'(\sum^m_{k=2}\lambda^{b_k}(||a^{(1)}_k||_{L^\infty}
		+||a^{(2)}_k||_{L^\infty})||g||^{b_k+1}_{L^\infty})\to 0$$
		as $\lambda\to 0$. Then by (\ref{2inftyest}) and (\ref{Rungelinear}) we get
		$$||a^{(1)}_1- a^{(2)}_1||_{L^2(\Omega\times (0, T))}\leq 
		2\delta ||a^{(1)}_1- a^{(2)}_1||_{L^\infty(\Omega\times (0, T))}.$$
		Now we conclude that $a^{(1)}_1= a^{(2)}_1$ since $\delta$ is arbitrary.
		
		Iteratively, once we have shown $a^{(1)}_j= a^{(2)}_j$ ($1\leq j\leq m'-1$), we have 
		$$(a^{(1)}_{m'}(x, t)- a^{(2)}_{m'}(x, t))|u_{\lambda, g}|^{b_{m'}}
		u_{\lambda, g}
		= R^{(2)}_{m'}(x, t, u_{\lambda, g})
		- R^{(1)}_{m'}(x, t, u_{\lambda, g})$$
		in $\Omega\times (0, T)$. 
		Then we can repeat the procedure above to conclude that $a^{(1)}_{m'}= a^{(2)}_{m'}$.
	\end{proof}
	\bibliographystyle{plain}
	{\small\bibliography{Reference8}}

\begin{thebibliography}{10}

\bibitem{banerjee2022calder}
Agnid Banerjee, Venkateswaran~P Krishnan, and Soumen Senapati.
\newblock The {Calder\'on} problem for space-time fractional parabolic
  operators with variable coefficients.
\newblock {\em arXiv preprint arXiv:2205.12509}, 2022.

\bibitem{chen2012space}
Zhen-Qing Chen, Mark~M Meerschaert, and Erkan Nane.
\newblock Space--time fractional diffusion on bounded domains.
\newblock {\em Journal of Mathematical Analysis and Applications},
  393(2):479--488, 2012.

\bibitem{covi2020inverse}
Giovanni Covi.
\newblock An inverse problem for the fractional {Schr\"odinger} equation in a
  magnetic field.
\newblock {\em Inverse Problems}, 36(4):045004, 2020.

\bibitem{covi2022higher}
Giovanni Covi, Keijo M{\"o}nkk{\"o}nen, Jesse Railo, and Gunther Uhlmann.
\newblock The higher order fractional {Calder\'on} problem for linear local
  operators: Uniqueness.
\newblock {\em Advances in Mathematics}, 399:108246, 2022.

\bibitem{feizmohammadi2021fractional}
Ali Feizmohammadi, Tuhin Ghosh, Katya Krupchyk, and Gunther Uhlmann.
\newblock Fractional anisotropic {Calder\'on} problem on closed riemannian
  manifolds.
\newblock {\em arXiv preprint arXiv:2112.03480}, 2021.

\bibitem{ferreira2007determining}
David Dos~Santos Ferreira, Carlos~E Kenig, Johannes Sj\"ostrand, and Gunther
  Uhlmann.
\newblock Determining a magnetic {Schr\"odinger} operator from partial cauchy
  data.
\newblock {\em Communications in mathematical physics}, 271(2):467--488, 2007.

\bibitem{folland1995introduction}
Gerald~B Folland.
\newblock {\em Introduction to partial differential equations}.
\newblock Princeton university press, 1995.

\bibitem{gal2020fractional}
Ciprian~G Gal and Mahamadi Warma.
\newblock {\em Fractional-in-time semilinear parabolic equations and
  applications}.
\newblock Springer, 2020.

\bibitem{ghosh2017calderon}
Tuhin Ghosh, Yi-Hsuan Lin, and Jingni Xiao.
\newblock The {Calder\'on} problem for variable coefficients nonlocal elliptic
  operators.
\newblock {\em Communications in Partial Differential Equations},
  42(12):1923--1961, 2017.

\bibitem{ghosh2020uniqueness}
Tuhin Ghosh, Angkana R{\"u}land, Mikko Salo, and Gunther Uhlmann.
\newblock Uniqueness and reconstruction for the fractional {Calder\'on} problem
  with a single measurement.
\newblock {\em Journal of Functional Analysis}, page 108505, 2020.

\bibitem{ghosh2020calderon}
Tuhin Ghosh, Mikko Salo, and Gunther Uhlmann.
\newblock The {Calder\'on} problem for the fractional {Schr\"odinger} equation.
\newblock {\em Analysis \& PDE}, 13(2):455--475, 2020.

\bibitem{ghosh2021calder}
Tuhin Ghosh and Gunther Uhlmann.
\newblock The {Calder\'on} problem for nonlocal operators.
\newblock {\em arXiv preprint arXiv:2110.09265}, 2021.

\bibitem{helin2020inverse}
Tapio Helin, Matti Lassas, Lauri Ylinen, and Zhidong Zhang.
\newblock Inverse problems for heat equation and space--time fractional
  diffusion equation with one measurement.
\newblock {\em Journal of Differential Equations}, 269(9):7498--7528, 2020.

\bibitem{kian2018global}
Yavar Kian, Lauri Oksanen, Eric Soccorsi, and Masahiro Yamamoto.
\newblock Global uniqueness in an inverse problem for time fractional diffusion
  equations.
\newblock {\em Journal of Differential Equations}, 264(2):1146--1170, 2018.

\bibitem{kojima2021existence}
Mizuki Kojima.
\newblock The existence and regularity theory for abstract semilinear
  time-fractional evolution equations.
\newblock {\em arXiv preprint arXiv:2106.00362}, 2021.

\bibitem{kowinverse}
Pu-Zhao Kow and Jenn-Nan Wang.
\newblock Inverse problems for some fractional equations with general
  non-linearity.
\newblock {\em math.ntu.edu.tw}, 2022.

\bibitem{krupchyk2014uniqueness}
Katya Krupchyk and Gunther Uhlmann.
\newblock Uniqueness in an inverse boundary problem for a magnetic
  {Schr\"odinger} operator with a bounded magnetic potential.
\newblock {\em Communications in Mathematical Physics}, 327(3):993--1009, 2014.

\bibitem{lai2020calderon}
Ru-Yu Lai, Yi-Hsuan Lin, and Angkana R{\"u}land.
\newblock The {Calder\'on} problem for a space-time fractional parabolic
  equation.
\newblock {\em SIAM Journal on Mathematical Analysis}, 52(3):2655--2688, 2020.

\bibitem{lai2021inverse}
Ru-Yu Lai and Ting Zhou.
\newblock An inverse problem for non-linear fractional magnetic schrodinger
  equation.
\newblock {\em arXiv preprint arXiv:2103.08180}, 2021.

\bibitem{li2021determining}
Li~Li.
\newblock Determining the magnetic potential in the fractional magnetic
  {Calder\'on} problem.
\newblock {\em Communications in Partial Differential Equations},
  46(6):1017--1026, 2021.

\bibitem{li2021fractional}
Li~Li.
\newblock A fractional parabolic inverse problem involving a time-dependent
  magnetic potential.
\newblock {\em SIAM Journal on Mathematical Analysis}, 53(1):435--452, 2021.

\bibitem{li2021inversediff}
Li~Li.
\newblock An inverse problem for a fractional diffusion equation with
  fractional power type nonlinearities.
\newblock {\em Inverse Problems and Imaging}, 16(3):613--624, 2022.

\bibitem{li2022elastic}
Li~Li.
\newblock On inverse problems arising in fractional elasticity.
\newblock {\em (to appear) Journal of Spectral Theory}, 2022.

\bibitem{meerschaert2002governing}
Mark~M Meerschaert, David~A Benson, Hans-Peter Scheffler, and Peter
  Becker-Kern.
\newblock Governing equations and solutions of anomalous random walk limits.
\newblock {\em Physical Review E}, 66(6):060102, 2002.

\bibitem{nakamura1995global}
Gen Nakamura, Ziqi Sun, and Gunther Uhlmann.
\newblock Global identifiability for an inverse problem for the {Schr\"odinger}
  equation in a magnetic field.
\newblock {\em Mathematische Annalen}, 303(1):377--388, 1995.

\bibitem{quan2022calder}
Hadrian Quan and Gunther Uhlmann.
\newblock The {Calder\'on} problem for the fractional dirac operator.
\newblock {\em arXiv preprint arXiv:2204.00965}, 2022.

\bibitem{vergara2017stability}
Vicente Vergara and Rico Zacher.
\newblock Stability, instability, and blowup for time fractional and other
  nonlocal in time semilinear subdiffusion equations.
\newblock {\em Journal of Evolution Equations}, 17(1):599--626, 2017.

\bibitem{Warma2019approx}
Mahamadi Warma.
\newblock Approximate controllabilty from the exterior of space-time fractional
  diffusive equations.
\newblock {\em arXiv preprint arXiv:1802.08028}, 2019.

\bibitem{zacher2009weak}
Rico Zacher.
\newblock Weak solutions of abstract evolutionary integro-differential
  equations in hilbert spaces.
\newblock {\em Funkcialaj Ekvacioj}, 52(1):1--18, 2009.

\end{thebibliography}
\end{document}